    \def\l@subsection{\@tocline{2}{0pt}{2.9pc}{5pc}{}}
    \def\l@subsubsection{\@tocline{2}{0pt}{5pc}{7.5pc}{}}
\numberwithin{equation}{section}
\newtheorem{theorem}[equation]{Theorem}
\newtheorem{lemma}[equation]{Lemma}
\theoremstyle{definition}
\newtheorem{remark}[equation]{Remark}
\newcommand{\Z}{\mathbb Z}
\begin{document}
\title[Density Vinogradov Theorem]{
A Density Theorem for Higher Order \\ Sums of Prime Numbers
}

\author[Lacey]{Michael T. Lacey} 
    \address{ School of Mathematics, Georgia Institute of Technology, Atlanta GA 30332, USA}
    \email {lacey@math.gatech.edu}
    \thanks{Research of MTL and YR supported in part by grant  from the US National Science Foundation, DMS-2247254.}
\author[Mousavi]{Hamed Mousavi}
    \address{School of Mathematics, University of Bristol, Bristol, UK}
    \email{gj23799@bristol.ac.uk}
\author[Rahimi]{Yaghoub Rahimi}
    \address{School of Mathematics, Georgia Institute of Technology, Atlanta GA 30332, USA}
    \email{yrahimi6@gatech.edu}
\author[Vempati]{Manasa N. Vempati}
    \address{Department of Mathematics, Louisiana State University, USA}
    \email{nvempati@lsu.edu}

\begin{abstract}  
Let $P$ be a subset of the primes of lower density strictly larger than $\frac12$. 
Then, every sufficiently large even integer is a sum of four primes from the set $P$. 
We establish similar results for $k$-summands, with $k\geq 4$, and for $k \geq 4$ distinct subsets of primes.   
This extends the work of H.~Li, H.~Pan, as well as X.~Shao on sums of three primes, and A.~Alsteri and X.~Shao on sums of two primes.  
The primary new contributions come from elementary combinatorial lemmas. 
 \end{abstract}

\maketitle 
\tableofcontents

\section{Introduction}

The work of Vinogradov from the 1930's established that every sufficiently large odd integer is the sum of three primes, with corresponding result for higher order sums of primes.    Our focus here is on density versions of these results.  In the instance of $k$-fold sums of primes, with $k \geq 4$, if one takes a set of primes $P$ of lower density (in the primes) strictly greater than $\frac{1}{2}$, then Vinogradov's result remains true, restricting the representation to sums of $k$ primes from $P$. 

The case of three-fold sums  primes was initiated in work of Li and Pan \cite{MR2661445} who primarily focused on sums from three distinct sets of primes. 
The case of a single set of primes was settled by Shao \cite{MR3165421}.  A bit informally, any subset of primes of lower density strictly larger than $\frac 58$ satisfies the conclusion of Vinogradov's result.  Recently, Alsetri and Shao \cite{alsetriShao} considered the delicate case of the sum of two primes.  The Waring-Goldbach variants have been investigated in \cites{gao,sal}.

 The goal of this paper is to extend a density version of Vinogradov's three prime theorem to 
sums of four and more summands.  
They are stated in terms of the \emph{lower density} of subsets $P$ of the primes $\mathbb P$.  This is defined by 
\begin{equation}  \label{e:lower} 
     d(P) = \liminf_{N \rightarrow \infty} \frac{|P\cap \mathbb P \cap [1,N]|}{| \mathbb P \cap [1,N]|}.
\end{equation}

Our result is that for primes from a  single set,  density $\frac12$ is critical for all  $k$-fold sums,  $k\geq 4$. 

\begin{theorem} \label{t:oneSet} Let $k\geq 4$. 
  Let $P \subseteq \mathbb{P}$ be a subset of primes with the density $d(P) > \frac{1}{2}$. Then there is an integer $N_{P,k}$ so that for every  integer $n > N_{P,k}$ congruent to $k$ mod $2$, there are $p_i \in P$, $1\leq i \leq k$ such that  $n = \sum_{i=1}^k p_i$.
\end{theorem}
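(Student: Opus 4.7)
\emph{Proof plan.} The plan is to reduce $k\ge 5$ to $k=4$ by fixing a few summands, and then prove the base case $k=4$ by combining an exceptional-set estimate for three-prime sums in $P$ with an elementary pigeonhole argument that exploits the flexibility afforded by the fourth summand.

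\textbf{Reduction $k\ge 5\to k=4$.} Since $d(P)>\frac{1}{2}$ the set $P$ is infinite, so fix once and for all $k-4$ odd primes $q_5,\dots,q_k\in P$. For any $n$ with $n\equiv k\pmod 2$, set $m:=n-(q_5+\cdots+q_k)$. The parity computation $m\equiv k-(k-4)\equiv 0\pmod 2$ shows $m$ is even, and $m\to\infty$ as $n\to\infty$. Applying the $k=4$ case to $m$ produces four primes $p_1,\dots,p_4\in P$ summing to $m$, yielding $n=p_1+\cdots+p_4+q_5+\cdots+q_k$.

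\textbf{Base case $k=4$.} Given a large even $n$, I would fix one summand $p_1\in P$ and represent $m:=n-p_1$ (which is odd) as $p_2+p_3+p_4$ with $p_i\in P$. The key intermediate result is an \emph{exceptional-set estimate}: under $d(P)>\frac{1}{2}$ the set
\[
E_3(N):=\bigl\{m\in(2\mathbb{Z}+1)\cap[1,N]:m\neq p_2+p_3+p_4\text{ for all }p_i\in P\bigr\}
\]
satisfies $|E_3(N)|=o(N/\log N)$. Granted this, the pigeonhole step is immediate: for a suitable interval $I\subseteq[1,n]$ of length comparable to $n$, the set $P\cap I$ has cardinality $\gg n/\log n$ by the density hypothesis, so the shifts $\{n-p_1:p_1\in P\cap I\}$ form a large set of odd integers that far exceeds the exceptional set size $o(n/\log n)$. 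Some $p_1$ therefore has $n-p_1\notin E_3$, giving the desired four-prime representation of $n$.

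\textbf{Main obstacle.} The analytic heart of the argument is the exceptional-set bound. Shao's theorem \cite{MR3165421} gives $E_3(N)=\emptyset$ for $N$ large under the stronger hypothesis $d(P)>\frac{5}{8}$; extending this to an $o(N/\log N)$ bound at the density threshold $\frac{1}{2}$ is the real task. The threshold is sharp: $P=\{p\in\mathbb{P}:p\equiv 1\pmod 6\}$ has density exactly $\frac{1}{2}$ and all its three-prime sums lie in $3\pmod 6$, so the corresponding exceptional set is $\Theta(N)$. Passing just above $\frac{1}{2}$ should rely on the elementary combinatorial lemmas announced in the abstract: presumably they show that $d(P)>\frac{1}{2}$ forces $P$ to populate enough residue classes modulo every small modulus for the relevant singular series to stay bounded away from zero on all feasible residues, which together with the standard Hardy--Littlewood major/minor arc decomposition yields the required exceptional-set bound.
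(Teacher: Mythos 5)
Your reduction of $k\ge 5$ to $k=4$ by fixing $k-4$ odd primes of $P$ is fine, but the base case has a genuine gap: the exceptional-set estimate $|E_3(N)|=o(N/\log N)$ on which everything rests is false under the hypothesis $d(P)>\tfrac12$. Local obstructions to \emph{three}-fold sums persist for all densities up to $\tfrac58$ (this is exactly why $\tfrac58$ is Shao's threshold), and they produce exceptional sets of \emph{positive density}. Concretely, identify $\mathbb{Z}_{15}^*$ with $\mathbb{Z}_3^*\times\mathbb{Z}_5^*$ and take $A=(\{1\}\times\mathbb{Z}_5^*)\cup\{(2,1)\}$, a set of $5$ of the $8$ reduced residues. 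Any triple sum with first coordinate $2\bmod 3$ must use the element $(2,1)$ twice, so its second coordinate lies in $\{3,4,0,1\}$; hence $(2,2)$, i.e.\ the class $2\bmod 15$, is missed by $A+A+A$. The set $P$ of primes lying in the classes of $A$ has $d(P)=\tfrac58>\tfrac12$, yet every odd integer $\equiv 2\pmod{15}$ lies in $E_3$, so $|E_3(N)|\gg N$. Your cardinality pigeonhole (comparing $\gg n/\log n$ shifts $n-p_1$ against a supposedly $o(n/\log n)$ exceptional set) therefore cannot close the argument. To dodge a positive-density, locally structured exceptional set you would need to know exactly which residue classes it occupies and that $P$ supplies a compatible $p_1$ for the given $n$ --- and that is precisely a statement about $4$-fold local sumsets, not $3$-fold ones. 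The same defect undermines the closing heuristic of your last paragraph: at densities in $(\tfrac12,\tfrac58]$ the singular series for three-fold sums genuinely vanishes on a positive-density set of residues, so no major/minor arc refinement can rescue the bound.

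This is why the paper never passes through the three-prime problem. Its combinatorial core, Lemma \ref{lemOne} (packaged arithmetically as Lemma \ref{t:ZpOneSet}), shows directly that for $k\ge 4$ and every squarefree $q$, a weight on $\mathbb{Z}_q^*$ of mean $>\tfrac12$ has a $k$-fold sumset covering all of $\mathbb{Z}_q$, with all chosen weights positive and a quantitative lower bound on their sum; it is here that $k\ge4$ kills the mod-$15$ type obstructions, via Cauchy--Davenport. The global step is then Green's transference argument ($W$-trick, Bohr-set smoothing, and the Varnavides-type Lemma \ref{l:V}) applied to the $k$-fold convolution itself, as carried out for Theorem \ref{t:kDistinct} in \S\ref{s:proof_of_theorem}. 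If you wish to keep a two-stage structure, the missing ingredient is not an exceptional-set bound for three-fold sums but a local covering statement for four-fold sums at density $\tfrac12$.
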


In particular, the $\tfrac 58$ obstruction from the $3$ term case is no longer present. 
That is to be expected, as the $3$ term case is the critical case in Vinogradov's Theorem. 
The density condition $\tfrac 12$ is sharp. Consider the set of primes $P = \{p \in \mathbb{P} \colon  p \equiv 1 \operatorname{mod} 3 \}$, then we see that $d(P) = \frac{1}{2}$ and  only integers  equivalent to $k \operatorname{mod} 3$  can be the sum of $k$ primes in $P$.

The case of distinct subsets of primes is 
a bit more involved.  

\begin{theorem} \label{t:kDistinct}
  Let $k \geq 4$ and $P_1,\cdots,P_k \subseteq \mathbb{P}$ be $k$ subsets of primes satisfying  the lower density conditions $ \sum_{i=1}^k d(P_i) > \frac{k+1}{2}$ and 
  each $P_i$ contains at least one odd prime. 
  Then for every sufficiently large integer $n$ congruent to $k$ mod $2$, there are $p_i \in P_i$ such that $n = \sum_{i=1}^k p_i$.
\end{theorem}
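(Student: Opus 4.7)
My plan is to reduce Theorem~\ref{t:kDistinct} to the density version of the three-prime theorem for distinct subsets of primes (due to Li--Pan and Shao), by fixing $k-3$ of the primes $p_4 \in P_4, \dots, p_k \in P_k$ and then applying the three-prime theorem to $P_1, P_2, P_3$ with shifted target $n - p_4 - \cdots - p_k$. In this scheme the analytic machinery is routed through the known three-prime result, and the novelty sits entirely in the combinatorial choice of the $k-3$ fixed primes.

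A naive reduction on global densities fails, because the hypothesis $\sum_{i=1}^{k} d(P_i) > (k+1)/2$ does not, by pigeonhole alone, yield three indices for which $d(P_{i_1})+d(P_{i_2})+d(P_{i_3})$ exceeds the three-prime distinct-set threshold (approximately $2$): if all $k$ densities equal the average $(k+1)/(2k)$, then any triple sums to $3(k+1)/(2k) < 2$ for $k \geq 4$. One must therefore descend to \emph{local} densities in residue classes modulo small primes. The critical modulus is $q=3$, as signalled by the sharpness example $\{p \equiv 1 \pmod 3\}$ for Theorem~\ref{t:oneSet}, which is concentrated in a single residue class mod $3$ with density $1/2$; this configuration is what saturates the bound $(k+1)/2$.

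Accordingly I would partition each $P_i = P_i^{(1)} \sqcup P_i^{(2)}$ by residue mod $3$ and introduce relative densities $d_{i,a} = d(P_i \cap \{p \equiv a \pmod 3\}) / d(\mathbb{P} \cap \{p \equiv a \pmod 3\})$, so that $d(P_i) = \tfrac{1}{2}(d_{i,1} + d_{i,2})$ and the hypothesis reads $\sum_i (d_{i,1} + d_{i,2}) > k+1$. The combinatorial lemma I expect to need is then: for every sufficiently large $n$ of the correct parity, there exist a residue tuple $(a_1,\dots,a_k) \in \{1,2\}^k$ with $\sum_i a_i \equiv n \pmod 3$ and a three-element index set $I \subset \{1,\dots,k\}$ such that $\sum_{i \in I} d_{i,a_i}$ exceeds the three-prime distinct-set threshold, while each $P_j^{(a_j)}$ for $j \notin I$ is infinite (which uses the odd-prime hypothesis to rule out $P_j = \{2\}$). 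Given such a selection, one fixes $p_j \in P_j^{(a_j)}$ for $j \notin I$ and applies the three-prime density theorem to $\{P_i^{(a_i)}\}_{i \in I}$ with the shifted target to produce the remaining three primes.

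The main obstacle is proving the combinatorial lemma with the tight constant $(k+1)/2$: one must extract, from the sum-density hypothesis alone, residue-restricted densities large enough to feed the three-prime theorem, with the extremal case $P_i \subseteq \{p \equiv 1 \pmod 3\}$ barely missing the bound. Executing this sharp combinatorial argument, and verifying that the choice of $(a_i)$ and $I$ can be made jointly compatible with the congruence $\sum_i a_i \equiv n \pmod 3$ for arbitrary $n$, is the elementary but non-trivial content promised by the abstract.
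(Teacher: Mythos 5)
Your reduction to the three-prime theorem cannot be made to work, and the paper explicitly warns about this: in the introduction it notes that for $d(P_i)\approx \frac{k+1}{2k}+\frac{\epsilon}{k^2}$ one has $\sum_i d(P_i)>\frac{k+1}{2}$, yet every $k'$-element subfamily with $3\leq k'<k$ falls below the corresponding threshold $\frac{k'+1}{2}$, so ``we can never reduce the problem to less than $k$ primes.'' Your proposed fix --- relativizing to residue classes mod $3$ --- does not escape this. If each $P_i$ is equidistributed mod $3$ with density $\frac{k+1}{2k}+\epsilon$, then the relative densities you define satisfy $d_{i,a}=\frac{k+1}{2k}+\epsilon$ for both $a$, and every triple sums to $3\bigl(\frac{k+1}{2k}+\epsilon\bigr)<2$, so the lemma you ``expect to need'' (a triple exceeding the threshold $\approx 2$) is false. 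You would instead need a new three-prime theorem relativized to a progression mod $3$ with threshold $\frac{3(k+1)}{2k}$, which tends to $\frac{3}{2}$ as $k\to\infty$; but already in $\mathbb{Z}_5^*$ the three sets $A_1=A_2=A_3=\{1,2\}$ have relative density sum $\frac{3}{2}$ and their sumset misses the residue $2$, so no such theorem can hold near that threshold. More structurally: the local obstructions live at \emph{all} primes $q$, not just $q=3$, and the $k$-fold sumset of (say) $k$ copies of $\{1,2\}\subset\mathbb{Z}_5^*$ covers $\mathbb{Z}_5$ for $k\geq 4$ while no $3$-fold sub-sum does. Covering the local obstruction may genuinely require all $k$ summands, which is why fixing $k-3$ of the primes in advance destroys the argument.

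The paper's proof is structured entirely differently. It reduces to the three-prime theorem of Li--Pan only in the degenerate case where some $P_i$ has density zero (there the remaining densities do sum to more than $2$). In the main case it proves a $k$-variable local covering lemma (Lemma \ref{t:Zqk}): for every squarefree $q$ and functions $f_1,\dots,f_k$ on $\mathbb{Z}_q^*$ with average exceeding $c>\frac{k+1}{2k}$, every residue $n$ is a $k$-fold sum $x_1+\cdots+x_k$ with all $f_i(x_i)>0$ and $\sum_i f_i(x_i)$ bounded below. This is proved by induction on the number of prime factors of $q$ via Cauchy--Davenport together with elementary combinatorial lemmas (Lemmas \ref{l:sharpn2} and \ref{lemk}), with a special treatment of the factor $3$ when $k=4$. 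It then runs the full Green-type transference argument for $k$ sets: a $W$-trick using Siegel--Walfisz, Fourier analysis and Bohr sets on $\mathbb{Z}_N$, a restriction-type estimate (Lemma \ref{l:aa'}), and a Varnavides-type sumset lemma (Lemma \ref{l:V}) to count representations. None of this machinery appears in your outline, and it cannot be routed through the three-prime theorem as a black box.
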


Above, some $P_i$ can be finite collections, consisting of only one odd prime. 
This density condition is sharp in two different ways.  
Take $\frac12$ of the primes to be  $ P _{\frac12}= \{p \in \mathbb{P} \colon  p \equiv 1 \operatorname{mod}3 \}$. 
For $k \nequiv 1 \mod 3$,  and 
 $1\leq j \leq k-1$ set $P_j = P _{\frac12}$, 
 and $P_k = \mathbb P \setminus \{3\}$. 
The sum of densities is $\tfrac {k+1}2$,  and  integers that have remainder $k-1$ mod $3$ cannot be of the form $ p_1 + \cdots + p_k$ where $p_i \in P_i$. The construction is easy to modify for $k\equiv 1 \mod 3$. 
A second example is to take $P_j$ to be 
$P _{\frac12}$ for $1\leq j \leq k-2$, 
and then $P _{k-1}= \mathbb{P} $, and $P _{k} = \emptyset $. 
The sum of densities again equals $\frac{k+1}{2}$, yet no integer can be represented as a sum of primes $p_i\in P_i$.

Note that this extension is non-trivial. Consider  $d(P_i)  \approx \frac{k+1}{2k} + \frac{\epsilon}{k^2}$, so $d(P_i) > \frac{1}{2}$ and $ \sum_{i=1}^k d(P_i) > \frac{k+1}{2}$.
 But for every $3 \leq k' < k$ we have that $k'(\frac{k+1}{2k} + \frac{\epsilon}{k^2}) < \frac{k'+1}{2}$. 
Hence, for this setup we can never reduce the problem to less than $k$ primes.

In view of the Roth Theorem on three term arithmetic progressions, the question of a density version of the Vinogradov three primes 
is natural. Closely related questions appear in 
S\'ark\"ozy \cite{MR1832691}*{\S9.3,9.4}. 
It became even more apparent after B.~Green's proof of Roth Theorem in the primes \cite{MR2180408}.  
 In particular, the transference method invented in that paper has had a number of applications, and is a basic tool in the this paper. 
 We point the reader to this useful survey by Prendiville \cite{MR3680137}.

 H.~Li and H.~Pan \cite{MR2661445} established a result for the sum of three distinct sets of primes $P_j$, 
 for $j=1,2,3$. In particular, if $d(P_1)+d(P_2)+d(P_3) >2 $, then every sufficiently large odd integer is a sum of three primes $p_j\in P_j$.  
 And, X.~Shao \cite{MR3165421} determined that $\frac58$ is the critical lower density required for the three term result, with all three primes from a fixed set.    
 There is a local obstruction for all of these results, and that for Shao's Theorem arises from the integers mod $15$: 
 There are $5$ residues of $\mathbb{Z} _{15} ^{\ast}$, whose triple sum is not all of $\mathbb{Z} _{15}$.
 Very recently, Alsetri and X.~Shao \cite{alsetriShao} studied the binary case. 
 In particular, there is no finitary version of the density Goldbach result in this setting.   
 In related developments, D.~Basak \emph{et al.} 
 \cite{basak2024primesneededternarygoldbach} 
 showed that a zero density set of primes $Z$ is sufficient to represent all odd integers $n\geq 7$ 
 as sum of three primes in $Z$.  
 And, Leng and Sawhney \cite{2024arXiv240906894L} showed that certain sets of primes with restricted digits satisfy Vinogradov's three prime Theorem.  

 Our paper follows the lines of proof of \cites{MR2661445,MR3165421}. 
This proof has two principal components, first a set of showing that the local versions of the statements hold.  Second, a transference argument, which is due to Green \cite{MR2180408}.  
These are stated in the next section.  
They in turn follow from certain purely combinatorial lemmas. 
These are proved in \S\ref{sec:combinatorial_lemmas}. 
In contrast to the work of \cites{MR2661445,MR3165421}, 
the proofs are rather elementary.
That is the principal novelty of this paper.  
The second transference argument is  
 sophisticated and delicate argument.  
 In our setting, there is no essential difficulty in adapting it to context.

\section{Key Arithmetic Lemmas} 

For a  single sequence of primes, we need this Lemma. It provides sufficient conditions to cover $\mathbb{Z} _q$ in a $k$-fold sumset of a set in $\mathbb{Z}_q ^\ast$, for square free $q$.  Of course it is important that $k\geq 4$. 

\begin{lemma}   \label{t:ZpOneSet}
     Fix integer $k\geq 4$. 
     Let    $q$ be a  squarefree integer.
     Let $f \colon \Z_q^* \rightarrow [0,1]$ 
     satisfy 
    $$ \label{e.ll}
    \sum_{x \in \Z_q^*} f(x)  > c \phi(q),
    \qquad c> \tfrac12. 
    $$
     Then for any  integer $n$,  there exists $x_1,\cdots,x_k \in \Z_q^*$ such that $n \equiv \sum_{i=1}^k x_i \mod q$ and
    $$ \label{e:ZpOneSet}
     \sum_{i=1}^k f(x_i) >  kc , 
      \quad \textup{and} \quad \prod _{i=1}^k 
      f_i(x_i) > 0.
    $$
\end{lemma}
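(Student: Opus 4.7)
My plan combines extraction of a top-value element with averaging over tuples summing to $n - x^*$ and a sumset-covering argument for squarefree $q$. Let $S = \{x \in \mathbb{Z}_q^* : f(x) > 0\}$ and $\bar f = \phi(q)^{-1}\sum_x f(x)$. The hypothesis $\sum f > c\phi(q)$ with $f \leq 1$ gives $|S| > c\phi(q) > \phi(q)/2$ and $\bar f > c$; in particular there exists $x^* \in \mathbb{Z}_q^*$ with $f(x^*) \geq \bar f > c$. I fix such an $x^*$ as the $k$-th summand, reducing the task to finding $(x_1, \dots, x_{k-1}) \in S^{k-1}$ with $\sum_{i < k} x_i \equiv n - x^* \pmod q$ and $\sum_{i < k} f(x_i) > kc - f(x^*)$. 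The effective target mean $(kc - f(x^*))/(k-1)$ is strictly less than $c$, giving quantitative slack.

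For the existence of a supported $(k-1)$-tuple, the claim is $(k-1)S = \mathbb{Z}_q$ for $k - 1 \geq 3$. For $q = p$ prime, $|S| > (p-1)/2$ together with Cauchy--Davenport gives $2S = \mathbb{Z}_p$, hence $jS = \mathbb{Z}_p$ for all $j \geq 2$. For composite squarefree $q$, CRT decomposes $\mathbb{Z}_q^* \cong \prod_{p \mid q}\mathbb{Z}_p^*$; although the projection of $S$ onto an individual $\mathbb{Z}_p^*$ may have density below $\tfrac{1}{2}$, an elementary combinatorial argument shows that three summands suffice to cover $\mathbb{Z}_q$ once the global density in $\mathbb{Z}_q^*$ strictly exceeds $\tfrac{1}{2}$; this is in the spirit of the elementary combinatorial lemmas advertised in the introduction.

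For the sum inequality, I average $\sum_{i < k} f(x_i)$ over all tuples $(x_1, \dots, x_{k-1}) \in (\mathbb{Z}_q^*)^{k-1}$ with $\sum_{i < k} x_i \equiv n - x^*$. Via the Ramanujan-sum factorization $c_q(a) = \prod_{p \mid q} c_p(a)$ valid for squarefree $q$, the count of such tuples is uniform in the target residue up to lower-order corrections, negligible for $k \geq 4$. Hence the average equals $(k-1)\bar f$ plus small error; since
\[
(k-1)\bar f - \bigl(kc - f(x^*)\bigr) = (k-1)(\bar f - c) + (f(x^*) - c) > 0,
\]
with both terms strictly positive, some tuple achieves $\sum_{i < k} f(x_i) > kc - f(x^*)$. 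Appending $x^*$ yields $\sum_i f(x_i) > kc$.

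The main obstacle is reconciling the averaging, performed over $(\mathbb{Z}_q^*)^{k-1}$, with the support condition $\prod f(x_i) > 0$. If the near-optimal tuple contains an entry outside $S$, the sumset covering $(k-1)S = \mathbb{Z}_q$ allows swapping that entry for a supported one while preserving the sum; the resulting loss in $\sum_i f(x_i)$ is at most $1$, which must be compensated by the strict gap $(k-1)(\bar f - c) + (f(x^*) - c)$. Making this swap argument robust, particularly when the gap is small and $q$ has many small prime factors, is where the elementary combinatorial machinery of the paper is expected to do the essential work.
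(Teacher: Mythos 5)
Your proposal has two genuine gaps, and the first one is fatal to the overall strategy. You fix a top-value element $x^*$ as the $k$-th summand and then need the remaining $k-1$ summands, drawn from the support $S$, to realize the prescribed residue $n-x^*$; for $k=4$ this requires $3S=\mathbb{Z}_q$ whenever $|S|>\tfrac12\phi(q)$. That claim is false: as the paper itself notes, there is a set of $5$ of the $8$ reduced residues modulo $15$ whose triple sumset is a proper subset of $\mathbb{Z}_{15}$, and $5>\tfrac12\phi(15)=4$. So for $q=15$, $k=4$, and $f=\mathbf{1}_S$ with $S$ this bad set, your reduction can land on a target $n-x^*$ that is simply not representable as a sum of three elements of $S$. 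This is exactly the $\tfrac58$ obstruction that makes the three-summand theorem harder, and it is why the paper never peels off a summand: the introduction's remark that the problem "can never be reduced to fewer than $k$ primes" applies verbatim to your reduction from $k$ to $k-1$.

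The second gap is the averaging step. The number of representations of a residue class as a sum of $j$ units modulo $q$ is not uniform up to negligible error when small primes divide $q$: modulo $3$ with $j=3$ the counts are $2$ and $3$ depending on the target, a constant-factor fluctuation that persists (damped only geometrically) for larger $j$ and propagates multiplicatively over the prime factors of squarefree $q$. Consequently the average of $\sum_{i<k}f(x_i)$ over restricted tuples can differ from $(k-1)\bar f$ by $\Theta(1)$ — e.g. for $q=3$, $f(1)=1$, $f(2)=\eta$, the average over triples summing to $2$ is $1+2\eta$ versus your predicted $\tfrac32(1+\eta)$. Since the lemma is sharp at $c=\tfrac12$, your slack $(k-1)(\bar f-c)+(f(x^*)-c)$ can be made arbitrarily small while this deviation stays bounded below, so the averaging cannot close. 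The paper's proof avoids both problems by choosing all $k$ summands jointly: it inducts on the number of prime factors of $q$, averaging $f$ over fibers $\mathbb{Z}_q^*\cong\mathbb{Z}_{q_1}^*\times\mathbb{Z}_p^*$, and in the base case sorts the values of $f$, invokes a purely combinatorial lemma to select indices $i_1,\dots,i_k$ with $i_1+\cdots+i_k\geq p-1$ and large nonzero $f$-sum, and applies Cauchy--Davenport to the level sets $\{f\geq a_{i_m}\}$ to cover $\mathbb{Z}_p$; the case $3\mid q$ with $k=4$ receives a separate sharpened treatment.
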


The requirement that $c>\frac12$ in the hypothesis is sharp,  in view of the local mod $3$ obstruction.  
The important points about the conclusions are   two fold. 
First, the lower bound for the sum in  \eqref{e.ll} is positive and independent of $n$ and $q$. The nature of the bound is important to 
 the proof of the Lemma, as it permits an inductive proof. 
But it is  not important to the Theorem. 
Second, it is important that  the $f_i(x_i)$ are non-zero.

Simple examples show that the 
individual the $f (x_i)$ need not have a uniform bound.  
And, in the proof of the main theorem, we bound the number of representations of a large integer by the principal Vinogradov type bound, times an expression that involves the minimum of the $f(x_i)$.

For distinct subsets of primes, we need this result.    

\begin{lemma}   \label{t:Zqk}
    Let $q $ be a squarefree integer,   $k\geq 4$, and $n$ an integer congruent to $k$ mod $2$.  
    Fix $c > \tfrac {k+1}{2k}$.
     Let $f_i:\Z_q^* \rightarrow [0,1]$ for $i =1,\cdots,k$ be arbitrary functions that are not identically zero, with the average 
    \begin{equation}
    \sum_{x \in \Z_q^*} \sum_{i=1}^k f_i(x) > kc \phi(q).
    \end{equation}
        Then for any $n \in \Z_q$, there exists $x_1,\cdots,x_k \in \Z_q^*$ such that $n = \sum_{i=1}^k x_i$ and
    $$ \label{e:kclower}
    \prod _{i=1}^k  f_i(x_i) > 0, 
     \quad \textup{and} \quad 
          \sum_{i=1}^k f_i(x_i) >  
          \begin{cases}
              4(\tfrac {16}3 c-1)  & k=4,\textup{and $ $}  3\mid q 
              \\ 
              ck   & \textup{otherwise} 
          \end{cases}
$$
\end{lemma}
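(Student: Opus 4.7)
The plan is to induct on $\omega(q)$, the number of distinct prime factors of $q$, reducing to the case of prime modulus via a Chinese Remainder Theorem marginalization. Writing $q = pq'$ with $p$ prime and $\gcd(p,q') = 1$, define the marginals
\[
F_i(y) = \frac{1}{\phi(q')} \sum_{\substack{x \in \Z_q^* \\ x \equiv y \,(\operatorname{mod} p)}} f_i(x), \qquad y \in \Z_p^*.
\]
Then $\sum_y \sum_i F_i(y) > kc\,\phi(p)$, so the prime case applies to $(F_i)$ and yields residues $y_i \in \Z_p^*$ with $\sum y_i \equiv n \pmod p$, $\prod_i F_i(y_i) > 0$, and a sum bound of the form stated. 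Restricting each $f_i$ to the slice $\{x \equiv y_i \pmod p\}$ produces functions $\tilde f_i : \Z_{q'}^* \to [0,1]$ whose averaged sum equals $\phi(q') \sum_i F_i(y_i)$; these inherit the hypothesis of the lemma on $\Z_{q'}^*$, so the inductive hypothesis provides $x'_i \in \Z_{q'}^*$ satisfying the analogous conclusion. Combining via CRT gives the $x_i \in \Z_q^*$, with the exceptional bound triggered exactly when $p = 3$ is encountered in the peeling (and forwarded through any subsequent non-exceptional steps).

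For the prime base case, $p = 2$ is trivial since $\Z_2^* = \{1\}$ forces $x_i = 1$ and the parity $n \equiv k \pmod 2$ ensures consistency. For $p \geq 5$, writing $A_i = \operatorname{supp} f_i$, the hypothesis gives $\sum_i |A_i| \geq \sum_i \sum_x f_i(x) > kc(p-1) > (k+1)(p-1)/2$, which by Cauchy--Davenport forces $A_1 + \cdots + A_k = \Z_p$. Hence a tuple with $\prod_i f_i(x_i) > 0$ and $\sum x_i \equiv n \pmod p$ exists. The additional sum bound $\sum_i f_i(x_i) > ck$ then comes from averaging $\sum_i f_i(x_i)$ over all $(x_i) \in (\Z_p^*)^k$ with $\sum x_i \equiv n \pmod p$: the number of such tuples is nearly independent of $n$ for prime $p$, so the average equals $\phi(p)^{-1} \sum_i \sum_x f_i(x) > kc$ up to lower-order terms, and intersecting the pigeonhole locus with the Cauchy--Davenport existence locus produces a tuple simultaneously exceeding $ck$ in sum and strictly positive in product.

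The principal obstacle is the remaining case $p = 3$, which is the source of the exceptional bound. Here $\Z_3^* = \{1, 2\}$ and $\sum x_i \equiv n \pmod 3$ fixes the number of coordinates equal to $2$ modulo $3$; the problem reduces to choosing a subset $S \subseteq \{1, \dots, k\}$ of the correct size modulo $3$ with $x_i = 1$ on $S$ and $x_i = 2$ off $S$, subject to $f_i(1) > 0$ for $i \in S$ and $f_i(2) > 0$ for $i \notin S$. Because each $f_i$ is not identically zero, if $f_i(2) = 0$ then automatically $f_i(1) > 0$ (and vice versa), and the sum hypothesis $\sum_i f_i(1) + \sum_i f_i(2) > 2kc$ guarantees enough compatible indices; optimizing $\sum_{i \in S} f_i(1) + \sum_{i \notin S} f_i(2)$ over admissible $S$ yields $> ck$ in all but the most constrained configuration. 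For $k = 4$, however, the residue class of $n \pmod 3$ requiring exactly two twos has $\binom{4}{2} = 6$ admissible arrangements while the other classes have $\binom{4}{0} + \binom{4}{3} = 5$, and the resulting unevenness in the available set of $S$'s necessitates a sharper extremal calculation that tracks the worst-case loss; this is what produces the weaker bound appearing in the exceptional case, and verifying that it feeds back consistently into the CRT induction is where the most care is needed.
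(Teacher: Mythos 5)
Your overall architecture --- induction on $\omega(q)$ via CRT marginalization, Cauchy--Davenport at prime moduli, trivial handling of $p=2$ using $n\equiv k \pmod 2$, and a separate extremal analysis at $p=3$ --- is exactly the paper's. But the prime base case $p\geq 5$, which is the heart of the lemma, has a genuine gap. Your argument produces two facts separately: (i) by Cauchy--Davenport on the supports, \emph{some} tuple on the hyperplane $\sum x_i \equiv n$ has $\prod_i f_i(x_i)>0$; (ii) by averaging over the hyperplane, \emph{some} tuple has $\sum_i f_i(x_i)>ck$. "Intersecting the pigeonhole locus with the Cauchy--Davenport existence locus" does not produce a tuple with both properties: two nonempty subsets of the hyperplane need not meet, and the tuple achieving the large sum may well have a vanishing coordinate (indeed this is the whole difficulty --- a few coordinates with $f_i(x_i)=1$ can carry the average while another is forced to zero). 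Achieving both conclusions \emph{simultaneously} is precisely what the paper's combinatorial Lemma 3.4 (\ref{lemk}) supplies: ordering the values $a_{0,j}\geq a_{1,j}\geq\cdots$ of each $f_j$, it extracts thresholds $a_{i_j,j}>0$ with $\sum_j i_j\geq p-1$ and $\sum_j a_{i_j,j}>ck$; the level sets $A_j=\{x: f_j(x)\geq a_{i_j,j}\}$ then have $\sum_j|A_j|\geq p-1+k$, so Cauchy--Davenport covers $\Z_p$, and \emph{every} representation of $n$ drawn from these level sets automatically satisfies both the sum and positivity conclusions. This threshold/level-set mechanism is the paper's main novelty and is absent from your sketch. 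A secondary weakness of the same step: the conditional marginal of $x_i$ on the hyperplane (with all coordinates in $\Z_p^*$) is not uniform, so "up to lower-order terms" is not harmless when $p$ is small and $c$ is close to $\frac{k+1}{2k}$.

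The $p=3$ discussion has the same character: you correctly identify that the residue class of $n$ determines the admissible patterns of ones and twos and that $k=4$ is the uneven case, but the "sharper extremal calculation" you defer is not a routine verification --- it is the paper's Lemma 3.3 (\ref{l:sharpn2}), a several-case argument producing the weaker bound $\tfrac{16}{3}c-1$, and it is the entire reason the exceptional case appears in the statement. Note also that the order of peeling matters for propagating that weaker bound: the paper handles all odd prime factors other than $3$ first (where the strong conclusion $>ck$ is available and self-reproducing under the fiberwise induction) and applies the $p=3$ lemma only at the final stage, since the degraded conclusion cannot be fed back into the inductive hypothesis. Your plan to "forward it through subsequent non-exceptional steps" would break the induction.
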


\subsection{Proof of Lemma \ref{t:Zqk} } 
 \label{sub:proof_of_theorem}

Both Lemmas conclude that $\mathbb{Z} _q$ is contained in a sumset. 
 Thus, the Cauchy-Davenport Theorem is an important tool, in the case that $q$ is prime. 
  We recall it here.  

\begin{theorem}[Cauchy-Davenport Theorem] 
 \label{thm:CDav}
Let  $p$   a prime number and $k \geq 2$ and $A_i$ be non-empty subsets of $\Z/p\Z$ for $1 \leq i \leq k$. Then
$$
|A_1+\cdots+A_k| \geq \min(p,\sum_{i=1}^k |A_i| -k+1).
$$
\end{theorem}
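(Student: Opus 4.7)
The plan is to reduce the general statement to the case $k=2$ by a straightforward induction on $k$, and then to handle the binary case via Dyson's $e$-transform together with a nested induction on $|B|$.

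For the reduction to $k=2$, set $B = A_2 + \cdots + A_k$, apply the inductive hypothesis on $k$ to get $|B| \geq \min(p, \sum_{i=2}^k |A_i| - k + 2)$, and then invoke the $k=2$ statement for the pair $A_1, B$. The two minima combine cleanly (the case $|B| \geq p$ is trivial since then $A_1 + B = \Z/p\Z$), and this step presents no real difficulty.

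For the $k=2$ case I will show that if $A, B \subseteq \Z/p\Z$ are non-empty and $|A+B| < p$, then $|A+B| \geq |A|+|B|-1$. The key device is the Dyson transform: for each $e \in \Z/p\Z$, set $A(e) = A \cup (B+e)$ and $B(e) = B \cap (A-e)$. A direct check gives $A(e) + B(e) \subseteq A+B$, and inclusion-exclusion applied to $A(e) = A \cup (B+e)$, together with the identification $A \cap (B+e) = B(e) + e$, yields $|A(e)| + |B(e)| = |A| + |B|$. I then induct on $|B|$; when $|B|=1$ the conclusion $|A+B|=|A|$ is immediate.

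For the inductive step with $|B| \geq 2$ I seek an $e$ such that $B(e)$ is both non-empty and strictly smaller than $B$: non-emptiness requires $e \in A-B$, while strict decrease requires $B+e \not\subseteq A$. If such $e$ exists, the inductive hypothesis applied to $A(e), B(e)$ gives $|A+B| \geq |A(e)+B(e)| \geq |A(e)|+|B(e)|-1 = |A|+|B|-1$. Otherwise $B + e \subseteq A$ for every $e \in A - B$, hence $A + (B-B) \subseteq A$, so the stabilizer $H = \{s \in \Z/p\Z : A+s = A\}$ contains $B-B$. Because $p$ is prime, $H$ is either $\{0\}$ or all of $\Z/p\Z$: the first case forces $|B|=1$, contradicting the standing assumption, and the second forces $A = \Z/p\Z$, contradicting $|A+B| < p$. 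The main obstacle is precisely this last structural dichotomy, where primality of $p$ enters essentially through the triviality of proper subgroups of $\Z/p\Z$, and it is also the reason the prime modulus cannot be dropped from the hypothesis.
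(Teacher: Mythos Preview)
The paper does not supply a proof of this statement; the Cauchy--Davenport theorem is simply recalled as a classical tool and used without argument. Your proof is correct and follows one of the standard routes: induction on $k$ to reduce to the binary case, then Dyson's $e$-transform with an inner induction on $|B|$, using the primality of $p$ only through the fact that $\Z/p\Z$ has no nontrivial proper subgroup. One minor remark: when you apply the inductive hypothesis to the pair $A(e),B(e)$ you implicitly need $|A(e)+B(e)|<p$, which you have because $A(e)+B(e)\subseteq A+B$ and $|A+B|<p$; it is worth saying this explicitly.
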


We will give the proof of Lemma \ref{t:Zqk},  
the proof of the other being easier. 
 We begin by proving that if $q$ 
 is an odd square free integer,   with $3\nmid q$,  
 then a stronger conclusion holds.  
 Namely, for any $n\in \mathbb{Z}_q$, there 
 are $x_1, \ldots, x_k \in \mathbb{Z} _q ^{\ast}$ with $n=x_1+ \cdots+ x_k$, and 
 $$ \label{e:stronger}
     \sum_{i=1}^k f_i(x_i) > ck, 
      \quad \textup{and} \quad \prod _{i=1}^k 
      f_i(x_i) > 0.
    $$
The conclusion is stronger than the Theorem as the lower bound on the sum is larger, and we have imposed divisibility criteria. 
This conclusion is suitable for induction.

The proof is by induction on $\omega (q)$, 
the number of prime factors of $q$.  
The base case is that of a prime $p >3$. 
 Set 
    $$\{f_i(m)\}_{m\in \mathbb{Z}_p^{*}}=\{a_{m,i}\}_{m=0}^{p-2}$$
    such that $a_{0,j} \geq a_{1,j} \geq  \cdots \geq a_{p-2,j}$ are the reduced residues. By   assumption,   we know that
    $$
    \sum_{j=1}^k\sum_{m=0}^{p-2}a_{m,j} > kc  (p-1),
    $$
    Using Lemma \ref{lemk}, there exists $(i_1, \ldots, i_k)$ with $i_1+ \cdots+ a_i \geq p-1$ such that 
\begin{equation}\label{e:primecase}
        a_{i_1,1}+  \cdots+ a_{i_k,k} > ck,  
        \qquad  a_{i_1,1} \cdots  a_{i_k,k} \neq 0.
    \end{equation}
    Define $A_m = \{x  \in \Z_p^*, f_m(x) \geq a_{i_m,m} \}$.
     By construction, 
    $
        A_m = \{a_{0,m},a_{1,m},\cdots , a_{i_m,m}\}
    $, 
    which means that $\lvert A_m \rvert = m+1$. Hence, we have 
    \begin{equation}
 \sum_{j=0} ^{k}    \lvert A_{i_j} \rvert 
 \geq \sum_{j=0} ^{k} i_j+1 
 \geq p-1+k. 
    \end{equation}
    Now we use the Cauchy-Davenport  Theorem \ref{thm:CDav}, to conclude that the sumset  
     $A_{a_1}+ \cdots +A_{i_k}$  contains $\mathbb{Z} _p$.  Pick $n\in \mathbb{Z}$.  There exists $x_j\in A_{i_j}$,   such that 
    $$n \equiv  \sum_{j=1}^k x_j \pmod{p}$$
    and 
    $f(x_j) \geq a_{i_j} $. 
    Remember \eqref{e:primecase} to immediately conclude  that 
    \begin{align}
        \sum_{j=1}^k f_j(x_j) > ck , \qquad   
        f(x_1) \cdots f(x_k) \neq 0
    \end{align}
    This is the base case of \eqref{e:stronger}. 

    \smallskip 
    \emph{Induction step.} Suppose that for an integer $\omega > 1$, that \eqref{e:stronger} holds for all  $\omega (q) < \omega $.  Given $q $, odd squarefree integer  not divisible by $3$, and with $\omega (q)=\omega $, write $ q = q_1 p$, where $p$ is a prime.  
    The point here is that $\mathbb{Z} _q = \mathbb{Z} _{q_1} \oplus \mathbb{Z} _p$, 
    and we can identify $\mathbb{Z} _q ^{\ast} $ with $\mathbb{Z} _{q_1} ^{\ast} \times \mathbb{Z} _p ^{\ast}$.    
    And so we will argue fiberwise. 

Define $g_m  \colon \mathbb{Z}^*_{q_1}\rightarrow [0,1]$ by 
    $$
    g_j(x) = \frac{1}{\phi(p)} \sum_{y \in \Z_p^*} f_j(x,y), \qquad x \in \mathbb{Z}  _{q_1} ^{\ast}, \ 1\leq j \leq n.  
    $$
    By induction assumption, for any $n \in \mathbb{Z} $,  there exists 
    $x_1, \cdots,x_k \in \Z_{q_1}^*$ such that $n\equiv x_1+\cdots+x_k\pmod{q_1}$, and 
    $$
    g_1(x_1) + \cdots +g_k(x_k) > c k, 
    \quad\textup{and} \quad 
     \prod _{j=1}^k g(x_k)>0.
    $$
    By definition of the $g_j$, we have 
    $$
    \sum_{y \in \Z_p^*} f_1(x_1,y) + \cdots+ f_k(x_k,y) > ck \phi(p). 
    $$
We conclude from the base case,  that  
for any $m\in \mathbb{Z} _p$, 
there are $y_1, \ldots ,y_k$ so that 
$m \equiv y_1+ \cdots + y_k \mod p $,
\begin{equation}
f_1(x_1,y_1) + \cdots+ f_k(x_k,y_k) > ck
\quad 
\textup{and} 
\quad  \prod _{j=1} ^{k} f(x_j,y_j) >0
\end{equation}
with $m= y_1 + \cdots + y_k$.  
That completes the proof of \eqref{e:stronger}. 

\medskip

We have established Lemma \ref{t:Zqk} for 
odd square free $q$, not divisible by $3$. 
Note that the proof above applies in the case of $k\geq 5$, to include the case of $3\mid q$. 
This is because Lemma \ref{lemk} includes the case $n=2=\lvert \mathbb{Z} _3 ^{\ast} \rvert$ for $k\geq 5$.  
 
In the case of $k=4$, we need to address the case of $q$ odd, squarefree, and $3\mid q$. 
This uses Lemma \ref{l:sharpn2}. Note that $n=2$ in that Lemma, which is the cardinality of $\mathbb{Z} _3 ^{\ast}$.  
Write $q=3q_1$, and we modify in the obvious way the inductive approach.  Namely, we argue fiberwise, since $\mathbb{Z} _{q} = \mathbb{Z} _3 \oplus \mathbb{Z} _q$. 
  We have the condition \eqref{e:stronger}, which is applied to the data 
  \begin{equation}
  g _{j} (x) = \frac{1}{\phi(p)}\sum_{y\in \mathbb{Z} _q ^{\ast}} f_j(x,y), 
  \qquad x\in \mathbb{Z} _{q_1} ^{\ast},\ 
  1\leq j \leq 4.  
  \end{equation}
  Then, the hypothesis for \eqref{e:stronger} are in place. We conclude that 
  for $n\in \mathbb{Z} $, there are $x_1, \ldots, x_k \in \mathbb{Z} _q ^{\ast}$ such that  
  $n \equiv x_1 + \cdots+ x_4 \mod q_1$, 
\begin{equation}
  g_1 (x_1) + \cdot + g _{4} (x_4) 
  > c 4, \qquad  g_1 (x_1) \cdots g _{4} (x_4) >0.
 \end{equation}
  The stronger conclusion then tells us that 
  for the functions $f_j (x_j, y)$, for $y\in \mathbb{Z} _3 ^{\ast}$, the hypotheses of 
  Lemma \ref{l:sharpn2} are in place. 

  And so we can conclude this. 
  For odd squarefree $q$, 
  for any $n\in \mathbb{Z}_ q$, there 
 are $x_1, \cdots, x_k \in \mathbb{Z} _q ^{\ast}$ with $n=x_1+ \cdots+ x_k$, and 
 $$ 
     \sum_{i=1}^k f_i(x_i) > (2c-1)k, 
      \quad \textup{and} \quad \prod _{i=1}^k 
      f_i(x_i) > 0.
    $$

\medskip 

The last step is to prove the Theorem for even squarefree $q$. Thus, write $q=2 q_1$.  
Note that $\mathbb{Z} _q = \mathbb{Z} _2 \oplus \mathbb{Z} _{q_1} $.  
Now,  $n \equiv k \mod 2$.  
Write $n = (n_1, n_2)\in \mathbb{Z} _2 \oplus \mathbb{Z} _{q_1}$. 
The function $f$ is supported on $(1, r)$, for $r\in \mathbb{Z} _{q_1}$.  
The condition \eqref{e:stronger} gives us 
$x_1, \ldots, x_k \in \mathbb{Z} _{q_1} ^{\ast}$ such that  $x_1 + \cdots + x_k \equiv  n_2 \mod q_1$,  
and as well 
\begin{equation}
 f(1, x_1) + \cdots + f(1,x_k) 
 > c k, 
 \quad \textup{and} \quad 
 f(1, x_1) \cdots  f(1,x_k) >0. 
 \end{equation}
But then we have 
\begin{equation}
(1, x_1) + \cdots + (1,x_k) \equiv (n_1,n_2) 
\mod \mathbb{Z} _2 \oplus \mathbb{Z} _{q_1}. 
\end{equation}
And that completes the proof.

\section{Combinatorial Lemmas} 
\label{sec:combinatorial_lemmas}


We state and prove here the combinatorial lemmas central to the previous section.  
This is the main novelty of the paper.

\subsection{A Single Set of Primes} 
\label{sub:a_single_set_of_primes}


The main Lemmas are first for a single sequence, corresponding to a single sequence of primes.  

\begin{lemma} \label{lemOne}
    Fix $c>1/2$. 
    Let $k \geq 4$  be an integer 
    and $n\geq 2$ an even integer.  Let $\{ a_{i}\}_{i =0}^{n-1}$  be a  non-increasing sequence in $[0,1]$. 
    Assume that 
    $\sum_{i=0}^{n-1}  a_{i} > nc$ with $c>\frac12$, then there exist  $0\leq i_1,\cdots,i_k \leq n$ with $\sum_{j=1}^k i_j \geq n$ such that 
    \begin{equation}
        a_{i_1}+\cdots + a_{i_k}  > ck, 
        \quad \textup{and}  \quad 
        a_{i_1} \cdot\cdots \cdot  a_{i_k} >0. 
    \end{equation}
\end{lemma}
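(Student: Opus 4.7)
The plan is to give a short direct construction, bypassing both induction and Cauchy--Davenport. Take
\[
i_1 = i_2 = n/2 \qquad\text{(an integer, since $n$ is even),}\qquad i_3 = \cdots = i_k = 0.
\]
Then $\sum_{j=1}^k i_j = n$ automatically, so the index-sum condition is already satisfied; it remains to verify positivity of the $a_{i_j}$ and the lower bound $\sum a_{i_j} > ck$.

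First I would check positivity of $a_{n/2}$ (which forces $a_0 \geq a_{n/2} > 0$ as well). Since each $a_i \in [0,1]$, the number of strictly positive $a_i$ is at least $\sum_{i=0}^{n-1} a_i > nc > n/2$, hence is at least $n/2 + 1$. As the sequence is non-increasing, the positive entries are an initial segment, so $a_0, a_1, \ldots, a_{n/2}$ are all strictly positive.

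The key quantitative step is the two-block bound $a_0 + a_{n/2} > 2c$, which uses only monotonicity and the uniform pointwise bounds $a_i \leq a_0$ for $i < n/2$ and $a_i \leq a_{n/2}$ for $i \geq n/2$:
\begin{equation}
nc < \sum_{i=0}^{n-1} a_i \;=\; \sum_{i=0}^{n/2-1} a_i + \sum_{i=n/2}^{n-1} a_i \;\leq\; \tfrac{n}{2}\, a_0 + \tfrac{n}{2}\, a_{n/2}.
\end{equation}
Combined with $a_0 \geq a_{n/2}$, this also yields $a_0 > c$. Then the target sum rearranges cleanly:
\begin{equation}
a_{i_1} + \cdots + a_{i_k} \;=\; 2\,a_{n/2} + (k-2)\, a_0 \;=\; 2(a_0 + a_{n/2}) + (k-4)\, a_0 \;>\; 4c + (k-4)\,c \;=\; kc,
\end{equation}
where in the last inequality I use the hypothesis $k \geq 4$ together with $a_0 > c$ to handle the remainder term.

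The only real ``obstacle'' is guessing the correct selection of indices; once $i_1 = i_2 = n/2$, $i_3 = \cdots = i_k = 0$ is in place, the computation is routine. It is satisfying that the hypothesis $k \geq 4$ enters at exactly the expected spot, namely to ensure the coefficient $(k-4)$ is nonnegative when replacing $a_0$ by its lower bound $c$; for $k = 3$ the sign flips and the estimate fails, consistent with the fact that $\tfrac{1}{2}$ is critical only for $k \geq 4$, while $k=3$ requires the larger density $\tfrac{5}{8}$.
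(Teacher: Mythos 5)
Your proof is correct, and it takes a genuinely more direct route than the paper's. The paper argues by induction on even $n$: the inductive step splits into the case where $a_0+a_i\leq 2c$ for some $i$ (then the first and last entries are stripped off and the hypothesis is applied to the middle $n-2$ terms) and the case where $a_0+a_i>2c$ for all $i$, in which an alternating tuple $(0,n/2,0,n/2,\dots)$ very close to yours is used. Your observation that the two-block averaging bound
\begin{equation}
nc \;<\; \sum_{i=0}^{n/2-1} a_i + \sum_{i=n/2}^{n-1} a_i \;\leq\; \tfrac{n}{2}\bigl(a_0 + a_{n/2}\bigr)
\end{equation}
gives $a_0+a_{n/2}>2c$ \emph{unconditionally} (hence also $a_0>c$, and $a_{n/2}>2c-a_0\geq 2c-1>0$, which by itself already settles positivity without the counting argument) collapses both the induction and the case analysis into a single computation with the fixed tuple $(n/2,n/2,0,\dots,0)$. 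What the paper's inductive formulation buys is structural uniformity with the harder multi-sequence Lemma \ref{lemk}, where a removal-plus-induction scheme genuinely is needed; for the single-sequence statement your one-shot construction is shorter and self-contained, and it isolates exactly where $k\geq 4$ enters (the sign of the coefficient $k-4$ of $a_0$), consistent with the failure of the $\tfrac12$ threshold at $k=3$.
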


Here, we need a statement that is true for all even $n \geq 2$, with $k$ fixed. The choice of $c> \tfrac 12$ is required for $n=2$, which corresponds to the mod $3$ obstruction to the main theorem. 
We remark that the statement is true for all \emph{odd} integers $n\geq2$ as well. 
But we don't need that fact as  the totient function takes even integer values when applied to integers $q\geq 3$.

We turn to the proof, based on an induction on $n$. 
The base case   $n=2$.

\medskip     
    \textit{Base of Induction:} For $n=2$, let $a_0+a_1 > 2c>1$. It forces  $a_0,a_1 \neq 0$. 
    Consider the $k$ tuple of all $1$'s.  
If the conclusion does not hold, then 
\begin{align}
kc  & \geq k a_1 
\\ 
& > k(2c-a_0)\geq k (2c-1). 
\end{align}
We conclude that $1-2c\geq 0$, which is a contradiction.

\smallskip 

\textit{Induction Step:} 
Let $n\geq 4$ is an even integer, and assume that the statement of the Lemma holds for integer  even $m < n$.

Suppose that for some integer $1\leq i \leq n-1$ we have $a_0+a_i \leq 2c$.  Then this condition holds with $i=n$, and we can construct a  a sequence of length $n-2$ by 
removing the first and last entry from $\{a_i\}$, 
namely 
\begin{equation}
\{b_j\} = (a_1, \ldots, a_{n-2}). 
\end{equation}
It satisfies the hypothes of the of the Lemma for $n-2\geq 2$.  Thus, there are $j_1, \ldots, j_k$ so that $j_1+ \cdots + j_{n-2}\geq n$, 
all $b _{j_l}$ are non-negative, and 
\begin{equation}
b _{j_1} + \cdots + b _{j_k} > ck. 
\end{equation}
This completes this case, since $b _{j_k} = a _{j_k+1}$, and $n\geq 4$.  

Thus,  $a_0+a_i > 2c$ for every $1\leq i \leq n-1$. 
The sequence has at least $\lceil nc \rceil$ 
nonzero entries. As $n$ is even, this is at least $n/2+1$, that is $a_{n/2} >0$.  
And $a_0+a_{n/2} > 2c$.   
Form the $k$-term sequence 
\begin{equation}
(0, n/2, 0,n/2, \ldots)
\end{equation}
Since $k\geq 4$, the sum of the term above is at least $n$.  And, 
\begin{equation}
a_0 + a _{n/2} + a_0 + \cdots > ck. 
\end{equation}

    \begin{remark} The examples showing $c=\tfrac12 $ is sharp are easy to construct from sequence $\{a_i\}$ that take values $1$ and $\tfrac12$ according to the parity of $n$ and $k$. 
    
    \end{remark}

\subsection{Four and More Sets of Primes} 
\label{sub:many_sets_of_primes}


For four  sequences, the situation is more complicated.  This is the critical case, when $n=2$.  Its hypotheses are the same as the other cases, but the conclusion is weaker. 

\begin{lemma}\label{l:sharpn2}
    Let $n = 2$, and let $\{ a_i\}_{i =0}^{n-1}$, $\{ b_i\}_{i =0}^{n-1}$, $\{ c_i\}_{i =0}^{n-1}$, and $\{ d_i\}_{i =0}^{n-1}$ be four decreasing sequences in $[0,1]$, and $a_0b_0c_0d_0 > 0$. Assume that $\sum_{i=0}^{n-1} (a_i+b_i+c_i+d_i) > 4cn$ with $c>\frac{5}{8}$, then there exists  $(i,j,k,l)$ with $i+j+k+l \geq n$ such that 
    \begin{equation}
        a_i+ b_j + c_k + d_l > \tfrac{16}{3}c - 1,
    \end{equation}
    where $a_ib_jc_kd_l > 0$.
\end{lemma}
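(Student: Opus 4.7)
The plan is to argue by averaging over the six tuples $(i,j,k,l)\in\{0,1\}^4$ with $i+j+k+l=2$, and then to close the boundary regime $c$ close to $1$ by a structural refinement exploiting $[0,1]$-boundedness. Since the four sequences are non-increasing, replacing an index $0$ by an index $1$ only decreases $a_i+b_j+c_k+d_l$, so the maximum over tuples with $i+j+k+l\geq 2$ is attained at $i+j+k+l=2$. The admissible tuples are then in bijection with the $\binom{4}{2}=6$ ways of choosing which two of the four sequences contribute their second entry.

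I would carry out the averaging as follows. Summing $a_i+b_j+c_k+d_l$ over the six admissible tuples, each of the eight numbers $a_0,a_1,b_0,b_1,c_0,c_1,d_0,d_1$ appears exactly three times, since each variable has index $0$ in three of the six tuples and index $1$ in the other three. Hence
\begin{equation*}
\sum_{\substack{(i,j,k,l)\in\{0,1\}^4\\ i+j+k+l=2}}\bigl(a_i+b_j+c_k+d_l\bigr)\;=\;3\bigl(a_0+a_1+b_0+b_1+c_0+c_1+d_0+d_1\bigr)\;>\;24c,
\end{equation*}
and by pigeonhole at least one tuple satisfies $a_i+b_j+c_k+d_l>4c$. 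The numerical inequality $4c>\tfrac{16}{3}c-1$ is equivalent to $c<\tfrac{3}{4}$, which closes the range $\tfrac{5}{8}<c\leq\tfrac{3}{4}$ outright.

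The positivity clause $a_ib_jc_kd_l>0$ is handled as a side remark. The hypothesis $a_0b_0c_0d_0>0$ covers the index-$0$ factors, while the bound $\sum>8c$ together with $a_0,\ldots,d_0\leq 1$ gives $a_1+b_1+c_1+d_1>8c-4>1$ for $c>\tfrac{5}{8}$, which forbids three or more of the second entries from vanishing. Consequently, on at least five of the six admissible pairs every factor is positive, and the pigeonhole argument above can be restricted to these.

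The principal obstacle is the regime $c\geq\tfrac{3}{4}$, where $4c$ no longer dominates $\tfrac{16}{3}c-1$. Here the total slack $8-(a_0+\cdots+d_1)<8(1-c)\leq 2$ is small, so the entries must cluster near $1$. I would sharpen the averaging by ordering the four variables according to $\Delta_x=x_0-x_1\geq 0$ and choosing as index-$1$ variables the two with smallest $\Delta_x$; this minimises the total drop from $\sum x_0$ and, combined with the $[0,1]$-bounds on individual entries, should yield the refined bound $\tfrac{16}{3}c-1$. This last refinement, which blends the averaging with the boundary constraint, is the step I expect to require the most delicate case-work.
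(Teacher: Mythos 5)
Your averaging step is sound and, for $\tfrac58 < c < \tfrac34$, it does produce a tuple with $a_i+b_j+c_k+d_l > 4c > \tfrac{16}{3}c-1$; each of the eight entries indeed appears three times among the six tuples with $i+j+k+l=2$. However, there are two genuine gaps. First, the positivity clause is not handled. From $a_1+b_1+c_1+d_1 > 8c-4 > 1$ you may only conclude that at least \emph{two} of the second entries are nonzero, not that ``at least five of the six admissible pairs'' have all factors positive: a tuple with $i+j+k+l=2$ has all factors positive exactly when both variables assigned index $1$ have nonzero second entry, so the number of good tuples is $\binom{m}{2}$ where $m\geq 2$ is the number of nonzero second entries --- possibly a single tuple. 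Restricting the pigeonhole to the good tuples then fails, since the bad tuples can carry essentially all of the mass (if $c_1=d_1=0$ there is one good tuple, about which the average over all six says nothing). The cases $m=2$ and $m=3$ each require their own computation; e.g.\ for $m=2$ one uses $a_1+b_1+c_0+d_0 = (\text{total}) - a_0-b_0 > 8c-2 \geq \tfrac{16}{3}c-1$, and for $m=3$ one sums the three good tuples and exploits $d_0>0$, which is precisely the $y$-argument that occupies most of the paper's proof. These repairs are available but absent from your write-up.

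Second, the regime $c\geq \tfrac34$ is only a plan, not a proof. There the target $\tfrac{16}{3}c-1$ meets and then exceeds $4c$, and it exceeds the trivial ceiling $4$ once $c\geq\tfrac{15}{16}$ (take all eight entries equal to $1$), so no refinement of the averaging can close that range. (The paper's own argument also tacitly assumes $c<\tfrac34$ when it asserts $\tfrac{8c}{3}+1>\tfrac{16c}{3}-1$, and the lemma is only ever invoked with $c$ near $\tfrac58$; still, the statement you were asked to prove allows all $c>\tfrac58$.) As submitted, the proposal establishes the sum bound without positivity in a restricted range of $c$; the delicate part --- which is the entire content of the paper's proof --- is missing.
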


The $\tfrac{16}{3}c - 1$ is sharp.  
This example suffices.  
\begin{align}
a_{0,j}&=1, \ j=1,2,3,    \quad a _{0,4}= \epsilon , 
\\ 
a_{1,j}&=\tfrac 23, \ j=1,2,3,    \quad a _{0,4}= 0 .  
\end{align} 

\begin{proof}
The hypothesis written out is 
        \begin{align}\label{e:n4diff}
             a_0+a_1 + b_0+b_1 + c_0+c_1+d_0+d_1 > 8c >5.
        \end{align}
    There are at least $6$ nonzero elements. 
        
    Without loss of generality, assume that $a_1,b_1 \neq 0$. There is nothing more to prove if $a_1 + b_1 + c_0 + d_0 > \frac{16c}{3}-1$. So assume otherwise, which immediately implies that  $a_0+b_0+c_1+d_1 > \frac{8c}{3}+1> \frac{16c}{3}-1$. Considering $c_1,d_1 = 0$ implies that  $a_0+b_0>7/3$. But this is impossible,  so $a_1b_1c_1> 0$, which means that there are seven nonzero elements. 

    So for every $c>5/8$, without loss of generality, we can assume that $a_1b_1c_1>0$.

    If $d_1>0$, then either 
    $$a_0+b_0+c_1+d_1>\frac{16c}{3}-1,$$
    or 
    $$a_1+b_1+c_0+d_0>\frac{16c}{3}-1.$$
    So assume that $d_1=0$. 
    
    Take $y=\frac{1}{3}(16c-(a_0+b_0+c_0)-2d_0)$. If $y+d_0< \frac{16c}{3}-1$, then  $a_0+b_0+c_0> 3$, which is impossible.
    So $y+d_0>\frac{16c}{3}-1$. Now we are done if at least one of the following happens
        \begin{align}
            &a_0 + b_1 + c_1 +d_0 > y+d_0\\
            &a_1 + b_0 + c_1 +d_0> y+d_0\\
            &a_1 + b_1 + c_0 +d_0> y +d_0
        \end{align}
        Assume otherwise, that $a_0 + b_1 + c_1 \leq y$, $a_1 + b_0 + c_1 \leq y$, and $a_1 + b_1 + c_0 \leq y$. Adding them up implies that $(a_0+b_0+c_0) + 2(a_1+b_1 + c_1) \leq 3y$. So we have
        \begin{align}
            2\times \eqref{e:n4diff} - (a_0+b_0+c_0) - 2(a_1+b_1 + c_1) > 16c- 3y 
        \end{align}
        which means that $(a_0+b_0+c_0) > 2(8c-d_0)- 3y> a_0+b_0+c_0$. It completes the proof. 
\end{proof}

The remaining cases present no extra complications in the conclusions.

\begin{lemma} \label{lemk}
    Fix $c>\frac{k+1}{2k}$. 
    For integers $n,k$ assume 
     $(n \geq 3, k \geq4)$ or $(n = 2, k\geq 5)$.  
    Let $\{ a_{i,j}\}_{i =0}^{n-1}$ for $1 \leq j \leq k$ be $k$ non-increasing sequences in $[0,1]$, with $a _{0,j}>0$ for all $1\leq j \leq k$. 
    Assume that $ 
    \sum_{i=0}^{n-1} \sum_{j=1}^k a_{i,j} > c nk $ then there exist  $i_1,\cdots,i_k$ with $\sum_{j=1}^k i_j \geq n$ such that 
    \begin{equation} \label{e:715}
        a_{i_1,1}+\cdots + a_{i_k,k} 
         >  ck ,
    \quad \textup{and} \quad 
    a_{i_1,1} \cdots a _{i_k,k} \neq 0. 
    \end{equation}
\end{lemma}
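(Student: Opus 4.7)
The plan is induction on $n$, following the structure of Lemma \ref{lemOne}, with base cases $n=2$ (valid under $k\geq 5$) and $n=3$ (valid under $k\geq 4$).

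For the base case $n = 2$, $k \geq 5$, I would write $\sigma = \sum_j a_{0,j}$ and $\tau=\sum_j a_{1,j}$. The term-wise inequality $a_{0,j}\geq a_{1,j}$ gives $\sigma\geq \tau$, and combined with the hypothesis $\sigma+\tau>2ck$ one obtains $\sigma>ck$. The same hypothesis, together with $\sigma\leq k$, yields $\tau>k(2c-1)>1$, so the support set $T := \{j: a_{1,j}>0\}$ satisfies $|T|\geq 2$. When $|T|\in\{2,3\}$, the ``flip-$T$'' tuple $i_j=\mathbf{1}_{j\in T}$ has index sum $|T|\geq 2$ and $a$-sum $\tau + \sum_{j\notin T} a_{0,j} > 2ck - \sigma_T \geq 2ck - |T| > ck$ (the last inequality uses $ck > (k+1)/2 \geq 3 \geq |T|$). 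When $|T|\geq 4$, the assumption that every pair $p\neq q\in T$ satisfies $d_p+d_q \geq \sigma-ck$ (with $d_j := a_{0,j}-a_{1,j}$) leads, via averaging over the pairs together with $\sigma+\tau>2ck$, to $|T|<4$, a contradiction; hence some pair $p,q\in T$ satisfies $d_p+d_q<\sigma-ck$, and the tuple with $i_p=i_q=1$, rest zero, succeeds. The base case $n=3$ runs the same dichotomy over the state space $\{0,1,2\}^k$, using $\sum_j a_{0,j}>ck$ derived from $n\sum_j a_{0,j}\geq\sum_{i,j}a_{i,j}>cnk$.

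For the inductive step $n\geq 4$, compare the outer mass $m_\partial := \sum_j (a_{0,j}+a_{n-1,j})$ with $2ck$. If $m_\partial < 2ck$, I trim the outermost layers: the sequences $a'_{i,j}:=a_{i+1,j}$ for $i=0,\ldots,n-3$ satisfy $\sum_{i,j}a'_{i,j}>c(n-2)k$, so the induction hypothesis produces $j_1,\ldots,j_k$ with $\sum_l j_l\geq n-2$, $\sum_l a'_{j_l,l}>ck$, and non-vanishing product; setting $i_l=j_l+1$ gives $\sum_l i_l\geq n-2+k\geq n$ as required. If $m_\partial\geq 2ck$, I construct the tuple directly: two positions take $i_l=n-1$ (chosen to minimize the loss in $a$-value) and the rest take $i_l=0$, mirroring the ``$(0,n/2,0,n/2,\ldots)$'' construction of Lemma \ref{lemOne}.

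The main obstacle is ensuring the positivity hypothesis $a'_{0,j}=a_{1,j}>0$ needed to apply induction to the trimmed sequences. When $a_{1,j}=0$ for some $j$, that sequence vanishes for all $i\geq 1$ and $i_j=0$ is forced at those positions. My plan is to partition $\{1,\ldots,k\}$ into $S=\{j:a_{1,j}>0\}$ and $Z=\{j:a_{1,j}=0\}$, lock $i_j=0$ on $Z$, and apply the inductive statement only on $S$. The total-sum hypothesis forces $|Z|<n(k-1)/(2(n-1))$, so $|S|$ remains large enough to meet the inductive thresholds; the fixed contributions $a_{0,j}$ from $j\in Z$ are then absorbed into the final $a$-sum.
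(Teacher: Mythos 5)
Your base case $n=2$, $k\geq 5$ is correct, and the averaging-over-pairs argument there is clean. But the inductive step has a fatal structural flaw: you trim the top \emph{and} bottom layers, decrementing $n$ by $2$, so for $k=4$ and even $n$ your reduction chain terminates at the instance $n=2$, $k=4$. That instance is excluded from the lemma's hypotheses, and the statement is genuinely false there: the example following Lemma \ref{l:sharpn2} (take $a_{0,j}=1$ for $j\leq 3$, $a_{0,4}=\epsilon$, $a_{1,j}=\tfrac23$ for $j\leq 3$, $a_{1,4}=0$) satisfies $\sum a_{i,j}=5+\epsilon>8c$ for $c$ slightly above $\tfrac58$, yet every tuple with index sum $\geq 2$ and nonzero product is worth at most $\tfrac73+\epsilon<4c$. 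This is exactly why the paper needs the separate Lemma \ref{l:sharpn2} with the weaker bound $\tfrac{16}{3}c-1$. The paper's induction avoids the problem by deleting only \emph{one} entry from each sequence (the entry $a_{0,1}$ together with one positive-index entry $a_{l_j,j}$ from each $j\geq2$, chosen so that the deleted $k$-tuple sums to at most $kc$), which decrements $n$ by $1$ and, for $k=4$, always lands on the valid base case $n=3$. Your two-layer trim cannot be closed for $k=4$ without either proving the $n=4$, $k=4$ case from scratch or switching to a parity-breaking reduction.

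There are three further gaps. First, your repair of the positivity hypothesis by restricting to $S=\{j:a_{1,j}>0\}$ does not keep you inside the lemma's scope: your own bound $|Z|<n(k-1)/(2(n-1))$ only gives $|S|>(k+2)/3$ for $n\geq4$, so $|S|$ can equal $3$ when $k=4$; moreover neither the sum hypothesis $>cn|S|$ nor the target threshold $ck$ transfers to the restricted problem in any way you specify. Second, in the branch $m_\partial\geq 2ck$ the tuple with two coordinates at $n-1$ and the rest at $0$ fails for $k=4$ when exactly three of the bottom entries are positive: with $a_{0,j}=1$ ($j\leq3$), $a_{0,4}=\epsilon$, $a_{n-1,j}=\tfrac23$ ($j\leq3$), $a_{n-1,4}=0$ one has $m_\partial=5+\epsilon\geq 8c$ but the best such tuple is worth $\tfrac73+\epsilon<4c$; your flip-$T$ fallback needs $ck>|T|$, which fails at $|T|=3$, $k=4$, $c$ near $\tfrac58$. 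Third, the $n=3$ base case carries the entire weight of the $k=4$ statement and is dispatched in one sentence; the paper's treatment of $n=3$, $k=4$ is a full page of delicate case analysis, and nothing in your sketch indicates how the "same dichotomy" resolves it.
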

 
The value of $c$ in the hypothesis is sharp for all values of $k$, with $n=2$. 
We state and prove this lemma for all integers $n$, as there is no simplifications we could find by restricting to even values of $n$. \textbf{}

\begin{proof}
The proof is by induction on $n$. 

\smallskip 
\textbf{Bases Cases:} 
There are two base cases. 
The first  is $n=2$, and $k\geq 5$.
    Assume that 
    $$
    (a_{0,1}+a_{1,1}) + \cdots + (a_{0,k}+a_{1,k}) > 2kc,
    $$
    where $c > \frac{k+1}{2k}$ and $a_{0,j} \neq 0$ for all $1 \leq j \leq k$. Since $2kc > k+1$, we must have at least $k+2$ non-zero $a_{i,j}$. 
    All of the $a_{0,j}$ are non-negative, so  $a_{1,j}$ is non-zero for at least two choices of $j$.

    By reordering, we can assume that $a _{1,j}$ is decreasing in $j$.  Thus, $a _{1,2} >0$.  If $a _{1,3}=0$, we have 
    \begin{equation} \label{e:return}
    a_{1,1} + a_{1,2} + a_{0,3} + \cdots + a_{0,k} > 2kc -2 > kc. 
    \end{equation}
    And, we complete the proof as all the numbers above are positive.    
    If this inequality fails, then the complementary sum 
    \begin{equation}
    a_{0,1} + a_{0,2} + a_{1,3} + \cdots + a_{1,k}  > kc. 
    \end{equation}
    The first two terms are positive by assumption, and $kc > 3$, so there must be $4$ non-zero numbers.  From those, we can complete the proof. 

\smallskip 

The second base case is $n=3$ and $k=4$, 
in which case $12c > \frac {15}2$. 
Assume that 
        $$
        a_0+a_1+a_2 + b_0+b_1+b_2 + c_0+c_1+c_2+d_0+d_1+d_2 > 12c.
        $$
    Note that $12c>6$, so at least seven terms in the above sum are non-zero. Obviously, we know that $a_0,b_0,c_0,d_0\neq 0$ by assumption. We claim that $b_1\neq 0$. If not,  then $b_2=c_1=c_2=d_1=d_2=0$ as well by construction. That is, there are only six non-zero terms when 
     we must have at least seven nonzero terms. It immediately means that $a_1\neq 0$ as $s_1\geq s_2$. 
    
    We have several cases:
    \begin{itemize}
        \item Assume that $a_2 = 0$, then $b_2,c_2,d_2 = 0$ by the fact that $s_1\geq s_2\geq s_3\geq s_4$. It forces $c_1$ to be nonzero. Pick the $4-$tuple $(a_1,b_1,c_1,d_0)$ are nonzero and 
        $$a_1+b_1+c_1+d_0 > 12c- (a_0+b_0+c_0+d_1)> 12c-4 > 4c.$$
        So from now on assume that $a_2\neq 0$.
        \item If $c_1 = 0$, then $c_2,d_1,d_2 = 0$. It implies that 
        $$a_2+b_1+c_0+d_0 > 12c-(a_0+a_1+b_0+b_2)> 12c-4 > 4c.$$ 
        So from now on assume that $c_1\neq 0$.
        \item Assume that  $b_2 = d_1 = 0$, then $c_2,d_2=0$. So 
        $$a_1+b_1+c_1+d_0 > 12c-(a_0+b_0+c_0+a_2) > 12c-4 > 4c.$$ 
        From now on $(b_2,d_1)\neq (0,0)$.
        \item Assume that $b_2 \neq 0$ and $d_1 = 0$, which means that $d_2=0$. So 
        $$
        a_0+a_1+a_2 + b_0+b_1+b_2 + c_0+c_1+ \bar{c}_2 > 12c,
        $$
        where $\bar{c}_2 = c_2+d_0$  and all the terms are non-zero. Rename $\bar c_0 =c_0$ and $\bar c_1 =c_1$. By using pigeon hole principle, there exists $\{i,j,k\} \in \{0,1,2 \}^3$ such that $a_i+b_j+\bar c_k > 4c$. If $k\in \{0,1\}$, we are done. 
        
        Assume that $k=2$. The proof is finished if $c_2 \neq 0$. 
        So assume that $c_2  = 0$. It means that 
         $$
        a_0+a_1+a_2 + b_0+b_1+b_2 + c_0+c_1+d_0> 12c.
        $$
        Look at the subsets 
        $$\{a_1,b_2,c_0,d_0\} \textup{ and } \{a_2,b_1,c_1,d_0\}$$
        If the sum in each set is less than $4c$, then 
        $$
        a_0+b_0+8c-d_0> 12c.
        $$
        So $a_0+b_0> 4c+d_0\geq 2$, which is impossible. It means that $c_2\neq 0$. 
        So from now on $d_1\neq 0$.
        \item Assume that $b_2 = 0$. Then $c_2,d_2=0$. So
        $$
        a_0+a_1+a_2 + b_0+b_1 + c_0+c_1+ d_0+d_1 > 12c.
        $$
        We claim by pigeon hole principle, we have either
         $a_2+b_1+c_0+d_0 > 4c$ or $a_1+b_0+c_1+d_1 > 4c$. Otherwise
         $$a_0> 12c- (a_2+b_1+c_0+d_0) - (a_1+b_0+c_1+d_1) >4c >2$$
         which is a contradiction. So $b_2\neq 0$.

        \item If $a_0+b_2+c_2+d_2 > 4c$, then by the fact that the sequences are decreasing 
        $$a_0+b_2+c_1+d_1>4c$$
        So we are done. Assume otherwise,  $a_0+b_2+c_2+d_2 \leq 4c$. It implies that
        $$
        a_1+  a_2 + b_0+b_1 + c_0+c_1+d_0+d_1 > 8c,
        $$
        So obviously the sum of elements in one of the following subsets is bigger than $4c$: 
        $$\{a_1,b_1,c_0,d_1\} \textup{ and } \{a_2,b_0,c_1,d_0\}$$
        This completes the base of induction.
        
        
    \end{itemize}

\bigskip     

\textit{Induction Step:} 
We assume that $k$ and $n$ are such that 
if $k=4$, then $n>3$, otherwise $n>2$, 
and that the statement of the Lemma holds for $n-1$.   
First assume that there are $l_2,\cdots,l_k > 0$ such that 
    \begin{align}\label{e:0rest}
        a_{0,1} + a_{l_2,2}+ \cdots + a_{l_k,k} \leq  kc
    \end{align}
    We eliminate these terms from our sequence, to reduce back to the induction hypothesis. 
    \begin{align}
        &\{b_{i,1}\} = \{a_{1,1},a_{2,1}\cdots,a_{n-1,1} \}\\
        &\{b_{i,j}\} = \{a_{0,j},a_{1,j}\cdots,a_{l_j-1,j},a_{l_j+1,j},\cdots ,a_{n-1,j} \} \text{ for } 2\leq j\leq k
    \end{align}
    For the matrix $\{b_{i,j}\}$ we have
    \begin{align}
        &\sum_{0\leq i\leq n-2}\sum_{1\leq j\leq k}b_{i,j} \\
        &\quad = \sum_{0\leq i\leq n-1}\sum_{1\leq j\leq k}a_{i,j} - \left(a_{0,1} + a_{l_2,2}+ \cdots + a_{l_k,k}\right) \\
        &\qquad > kcn- kc = kc \cdot (n-1). 
    \end{align}
    Note that $a_{1,1}\neq 0$, otherwise $s_1=0$, which by our assumption implies that $s_2=\cdots = s_k=0$. So the only non zero elements would be $a_{0,j}$ for $j\leq k$, but this immediately means that 
    $$k+1<kcn<\sum_{i,j}a_{i,j}\leq k $$
    which is impossible. So $b_{0,1}=a_{1,1}\neq 0$. 
    In other words, $b_{0,1},\cdots , b_{0,k}>0$. Now all the criteria in the induction step is satisfied, so we conclude that there exists   $u_1,\cdots,u_k$ with $\sum_{j=1}^k u_j \geq n-1$ such that 
    \begin{equation}
        b_{u_1,1}+\cdots + b_{u_k,k}  > \tfrac 7{15} kc,
    \end{equation}
    where $b_{u_j,j} \neq 0$. By a change of variable, we know condition $\sum_{j=1}^k u_j \geq n-1$ for $b_{i,j}$ implies that condition  $\sum_{j=1}^k i_j \geq n$ for $a_{i,j}$ holds. This completes the proof. 

\medskip

    Finally, assume that there are no indices $l_2,\cdots,l_k > 0$ such that equation \eqref{e:0rest} holds. We must have $s_2 \geq 1$.  If $s_3< n-1$, then we must have 
    \begin{align}
        2 &\geq a_{0,1}+a_{s_2,2} +0+\cdots +0
        \\
        &=a_{0,1}+a_{s_2,2}+a_{s_3+1,3}\cdots +a_{s_k+1,k} >kc> \tfrac{k+1}2> \tfrac 52. 
    \end{align}
        which is impossibles. So $s_3=n-1$, 
        as are all the $s_j$ for $3\leq j \leq k$.  Moreover,  
        \begin{align}
            a_{0,1}+ a_{s_2,2}+a_{n-1,3}+a_{n-1,4}+\cdots + a_{n-1,k} > kc. 
        \end{align}
All the summands above are non-negative, so that completes the proof. 

\end{proof}

\section{Proof of the theorems} \label{s:proof_of_theorem}

We prove Theorem  \ref{t:kDistinct}. 
The hypotheses allow some sets of primes $P_i$ to have density $0$, or even be finite, but still contain an odd integer.  We address this case here by induction on $k$, assuming that the Theorem is true for all sets of primes of \emph{positive} density. 
The base case is $k=4$. 
If e.g.~$P_4$ has density zero, then $d(P_1)+d(P_2)+d(P_3) > 5/2 >2$. By the result of Li and Pan \cite{MR2661445}, every sufficiently large odd integer $n$ is the sum of $p_i\in P_i$, for $1\leq i \leq 3$.  But $P_4$ contains an odd prime, so every sufficiently large even integer $m$ is the sum of $p_i\in P_i$, for $1\leq i \leq 3$.  This completes the base case, and the inductive argument is very similar.

We proceed under the assumption that each subset of the primes has positive lower density. This proof follows the the lines of Green \cite{MR2180408}, with the additional modifications of \cites{MR3165421,MR2661445}. That is, the argument is already well documented in the literature.  The proof of Theorem \ref{t:oneSet} is left to the reader.

Let $ k \geq 4$ and 
let $\kappa = 10^{-k}(\sum_{i=1}^k d(P_i)-\frac{k+1}{2})$ and $\alpha_i = d(P_i)/(1+2\kappa)$. 
The integer $n $, congruent to $k$ mod $2$ is the one we show has an ample number of representations as a sum of primes $p_i \in P_i$, for $1\leq i \leq k$. 
This is  subject to the condition that $n$ 
 be sufficiently large.  
Being sufficiently large will consist of a few conditions. The first of these is that  for $\gamma_k = \frac{2}{k}$, this condition holds. 
$$
|P_i  \cap [0,\gamma_k n)| \geq (1+\kappa)\alpha_i \frac{\gamma_k n}{\log(n)},
$$
This is possible by the definition of the lower density \eqref{e:lower}.  

A $W$-trick is useful.  
Consider $\omega(n) = \frac{1}{4}  \log\log(n)$ and $
W = \prod_{\substack{ p \leq \omega(n) \\ p \text{ prime}}} p$. 
So $W \approx \log(n)$ and 
\begin{align}
    \sum_{\substack{x \leq \gamma_k n \\ (x,W) = 1}} \mathbf{1} _{P_i}(x) \log(x) & \geq \sum_{n^{\frac{1}{1+\kappa/2}} \leq x \leq \gamma_k n} \mathbf{1} _{P_i}(x) \log(x)
    \\ & \geq 
    \frac{\log(n)}{1 + \kappa/2}\left( \frac{(1+\kappa)\alpha_i(\gamma_k n)}{\log(n)} - n^{\frac{1}{1+\kappa/2}} \right)
    \\ & \geq 
     \alpha_i \gamma_k n.
\end{align}
Define 
$$
f_i(b) = \max \Biggl\{ 0, \frac{\phi(W)}{\gamma_k n} \sum_{\substack{x \leq \gamma_k n \\ x \equiv b \mod  W}} \mathbf{1} _{P_i}(x) \log(x) - \kappa \Biggr\}, \qquad b\in \mathbb Z^*_W. 
$$
We additionally require $n$ to be large enough that each $f_i(b) \leq 1$, 
and that 
\begin{align}
     \sum_{b \in \mathbb Z^*_W} \sum_{i=1}^k f_i(b) 
    & \geq 
   \frac{\phi(W)}{\gamma_k n} \sum_{b \in \mathbb Z^*_W} \sum_{\substack{x \leq \gamma_k n \\ x \equiv b \mod  W}} \biggl[ \sum_{i=1}^k \mathbf{1} _{P_i}(x) \biggr] \log(x) - k\kappa \phi(W)
   \\  \label{e:SW}
    & \geq 
    \Big(\sum_{i=1}^k \alpha_i - k \kappa \Bigr) \phi(W)
    > \frac{k+1}{2}\phi(W).
\end{align}
This is possible by  the Siegel-Walfisz Theorem, which we need as $W $ is a slowly growing function on $n$.  

 Our Theorem \ref{t:Zqk} assures us  there exist $b_1,\cdots, b_k \in \mathbb Z^*_W$  such that $n \equiv b_1 + \cdots + b_k \pmod  W$ with $f_1(b_1) + \cdots + f_k(b_k) > \frac{k+1}{2}$ and $f_i(b_i) > 0$. Without loss of generality assume that $1 \leq b_i < W$.

Let $N$ be a prime where $(1 + \kappa)n/W \leq N \leq (1 + 2 \kappa)n/W$, 
which is possible by the Prime Number Theorem.  And set 
\begin{gather}
    n' = W^{-1} (n - b_1-b_2- \cdots - b_k) \in \mathbb N , 
    \\ 
 A_i = \{ x \colon  Wx+b_i \in P_i \cap [1,\gamma_k n] \}.
 \end{gather}
We show that $n' \in A_1 + \cdots + A_k$. 
And do so in the typical fashion, namely estimating  
\begin{align}
     \mathbf{1}_{A_1} \ast \cdots 
     \ast \mathbf{1}_{A_k} (n') 
\end{align}
from below, by Fourier methods. Above, $f\ast g$ denotes convolution $\mathbb Z_N$.  Note that representing $n'$ as a sum of elements of $A_i$ in $\mathbb Z_N$ lifts to a solution in $\mathbb N$.   
The rest of the analysis is done on the group $\mathbb Z_N$.  

But we do this with the usual logarithmic correction.  Define a logarithmically weighted average of $A_i$ by 
$$
\alpha_i' = \sum_{x  } \mathbf{1}_{A_i}(x) \lambda_{b_i,W,N}(x),$$
where 
$$
\lambda_{b,W,N}(x) = \begin{cases}
    \phi(W)\log(Wx+b)/WN & \text{if } x \leq N, Wx+b \text{ is a prime.}
    \\
    0 & \text{otherwise.}
\end{cases}
$$
Each $\alpha_i'$ is positive, since 
\begin{align}
    \alpha_i' &= \frac{\phi(W)}{WN} \sum_{x} \mathbf{1}_{A_i}(x) \log (Wx+b_i)
    \\ &\geq  \frac{f_i(b_i)+\kappa}{(1 + 2 \kappa)}\geq \gamma_k  \kappa/2 = \kappa/k. 
\end{align}
Collectively, the satisfy 
\begin{align}
    \alpha_1'+ \cdots+ \alpha_k'  &\geq \frac{\gamma_k}{1 + 2 \kappa} (f_1(b_1) + \cdots + f_k(b_k)+k  \kappa) 
    \\  \label{e:alphaPrime}
    &\geq  \frac{\gamma_k}{1 + 2 \kappa} (\frac{k+1}{2}+k \kappa) \geq 1 + \frac{1}{k} + \kappa.
\end{align}
We consider $A_i$ as subsets of $\mathbb Z_N$. Note that if $n' \in A_1 + \cdots + A_k$ in $\mathbb Z_N$ then $n' \in A_1 + \cdots + A_k$ in $\mathbb Z$, since there are no $x_i \in A_i$ such that $x_1+\cdots + x_k = n' + N$. 
The rest of the analysis is done on this finite group. 

Consider
$$
a_i(x) = \mathbf{1}_{A_i}(x) \mu_i(x), \ \ \mu_i(x) = \lambda_{b_i,W,N}(x).
$$ 
Let $e(x) := e^{2 \pi i x}$.
For a function $f:\Z_N \to \mathbb{C}$, we can define the Fourier transform on $\mathbb Z_N$ by 
\begin{align} \label{e:FDq}
    \tilde{f}(x) = \sum_{n\in \mathbb Z_N} f(n) e\bigl(- xr/N\bigr),
\end{align}
In addition, the convolution on $\mathbb Z_N$ is 
\begin{align} 
     (f * g) \, (x) := \sum_{y\in \mathbb Z_N} f(y) g(x-y).
\end{align}
Therefore we have that $\widetilde{f*g} = \tilde{f} \cdot \tilde{g}$. 

Let $\epsilon, \delta > 0$ and define sets  
\begin{align} 
     R_i &= \{ r \in \mathbb Z_N \ \ | \ \ |\tilde a_i(r)| \geq \delta \},
\\ 
B_i &= \{ x \in \mathbb Z_N \ \ | \ \ \| xr/N \| \leq \epsilon \ \ \text{ for all } \ \ r \in R_i \},
\end{align}
where $\|x \| = \min_{z \in \mathbb Z} |x-z|$.  
So, $R_i$ is a super-level set for the Fourier transform of $a_i$, and $B_i$ is $R_i$ intersected with a Bohr set.  
Then, set 
$\beta_i = \frac{1}{|B_i|} 1_{B_i}$ and $a_i' = a_i*\beta_i*\beta_i$.

\begin{lemma}
\label{l:aa'}
We have the estimate 
\begin{align} 
    \left| \sum _{\substack{x_1,\cdots,x_k \in \mathbb Z_N \\ x_1 + \cdots + x_k = n'    }} \prod_{i=1}^k a_i'(x_i) -  \sum _{\substack{x_1,\cdots,x_k \in \mathbb Z_N \\ x_1 + \cdots + x_k = n'    }} \prod_{i=1}^k a_i(x_i) \right| \ll 
  \frac{C}{N} (\epsilon^2 \delta^{-5/2} + \delta^{k/(k+1)}).  
\end{align}
    
\end{lemma}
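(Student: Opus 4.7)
The plan is to execute a standard major-arc/minor-arc decomposition on the Fourier side of $\mathbb{Z}_N$, following the transference template of Green \cite{MR2180408}. First, expand both sums via Fourier inversion:
\[
\sum_{x_1+\cdots+x_k=n'} \prod_{i=1}^k a_i(x_i) = \frac{1}{N}\sum_{r\in \mathbb{Z}_N} e(rn'/N) \prod_{i=1}^k \tilde{a}_i(r),
\]
and likewise for the $a_i'$ sum, using the key identity $\tilde{a}_i'(r) = \tilde{a}_i(r)\,\tilde{\beta}_i(r)^2$. Then telescope the difference of $k$-fold products,
\[
\prod_{i=1}^k \tilde{a}_i'(r) - \prod_{i=1}^k \tilde{a}_i(r) = \sum_{j=1}^k \Bigl(\prod_{i<j}\tilde{a}_i'(r)\Bigr)\tilde{a}_j(r)\bigl(\tilde{\beta}_j(r)^2 - 1\bigr)\Bigl(\prod_{i>j}\tilde{a}_i(r)\Bigr),
\]
so that it suffices to bound each of the $k$ resulting expressions by $O(N^{-1}(\epsilon^2\delta^{-5/2}+\delta^{k/(k+1)}))$. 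Throughout, note the pointwise domination $|\tilde{a}_i'(r)| \leq |\tilde{a}_i(r)|$, since $\beta_i$ is a positive $\ell^1$-normalised measure.

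Fix $j$, and split the resulting $r$-sum into $r\in R_j$ versus $r\notin R_j$. On $R_j$, the defining property of the Bohr set $B_j$ yields $\|rx/N\|\leq \epsilon$ for every $x\in B_j$, whence $|\tilde{\beta}_j(r)-1|\ll \epsilon$ and so $|\tilde{\beta}_j(r)^2-1|\ll \epsilon$. Bounding the remaining $k-1$ Fourier factors trivially by $|\tilde{a}_i(r)|$ and applying Cauchy--Schwarz in $r$, together with the $\ell^{5/2}$-type size bound $|R_j|\ll \delta^{-5/2}$ coming from the restriction estimate for the prime majorants $\mu_i$, produces the $\epsilon^2 \delta^{-5/2}$ contribution.

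On $r\notin R_j$, one has $|\tilde{a}_j(r)|<\delta$, while $|\tilde{\beta}_j(r)^2-1|\leq 2$ and each remaining factor is dominated by $|\tilde{a}_i(r)|$. Applying H\"older in the $r$-sum, with exponents calibrated against the Green--Tao-type restriction estimate $\sum_{r\in \mathbb{Z}_N}|\tilde{a}_i(r)|^{k+1} \ll N$ for the prime-majorised weights $a_i=\mathbf{1}_{A_i}\mu_i$, and interpolating the smallness of $\tilde{a}_j$ on $R_j^c$ against this $\ell^{k+1}$-restriction bound, yields the $\delta^{k/(k+1)}$ contribution.

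The main obstacle is the pseudorandomness input: verifying the restriction estimate $\sum_r |\tilde{a}_i(r)|^{k+1}\ll N$ for $a_i=\mathbf{1}_{A_i}\mu_i$, and the corresponding $\ell^{5/2}$-size bound $|R_j|\ll\delta^{-5/2}$ for the large-spectrum sets. Both rely on a sieve majorant for the primes of Green--Tao type combined with the Siegel--Walfisz theorem already invoked above. Once these ingredients are in place, the decomposition above is standard Fourier analysis on $\mathbb{Z}_N$, and is essentially identical to the arguments of \cite{MR2180408,MR2661445,MR3165421}; we therefore omit the remaining computations.
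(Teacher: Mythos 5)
Your proposal follows essentially the same route as the paper: Fourier inversion, then a split of the frequency sum into the large spectrum (where the Bohr-set averaging makes $\tilde\beta_j(r)$ close to $1$ and $|R_j|\ll\delta^{-5/2}$ controls the count) and its complement (where $|\tilde a_j(r)|<\delta$ and a H\"older argument against the $\ell^p$, $p>2$, restriction estimate for the prime majorant applies); the paper simply factors out $1-\prod_i\tilde\beta_i(r)^2$ and splits on $r\in\bigcap_i R_i$ rather than telescoping over $j$, which is only a bookkeeping difference. The one point to tighten is the major-frequency bound: $|e(\theta)-1|\ll\|\theta\|$ only gives $|\tilde\beta_j(r)-1|\ll\epsilon$, so to obtain the stated $\epsilon^2$ you should use that $B_j$ is symmetric, whence $\tilde\beta_j(r)$ is real and $1-\tilde\beta_j(r)\ll\epsilon^2$ for $r\in R_j$.
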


\begin{proof}
We have that 
    \begin{align}
        \sum _{\substack{x_1,\cdots,x_k \in \mathbb Z_N \\ x_1 + \cdots + x_k = n'    }} \prod_{i=1}^k a_i(x_i) = (a_1*\cdots*a_k)(n') =\frac{1}{N} \sum_{r \in \Z_N} \prod_{i=1}^k \tilde{a}_i(r)  e\bigl(n'r/N\bigr).
    \end{align}
    Therefore
    \begin{align}
        & \left| \sum _{\substack{x_1,\cdots,x_k \in \mathbb Z_N \\ x_1 + \cdots + x_k = n'    }} \prod_{i=1}^k a_i'(x_i) -  \sum _{\substack{x_1,\cdots,x_k \in \mathbb Z_N \\ x_1 + \cdots + x_k = n'    }} \prod_{i=1}^k a_i(x_i) \right|
        \\ &
        = \frac{1}{N} \sum_{r \in \Z_q}  \left(\prod_{i=1}^k \tilde{a}_i(r)  \right) \left(1 - \prod_{i=1}^k\tilde \beta_i^2(r) \right)e\bigl(n'r/N\bigr).
    \end{align}
    We have that $|R_i| \leq C_1 \delta^{-5/2}$ for some constant $C_1$ and if $r \in R:= R_1\cap \cdots R_k $ then 
    $$
    \left|1 - \prod_{i=1}^k\tilde \beta_i^2(r) \right| \leq C_2 \epsilon^2.
    $$
    Therefore since $|\tilde a_i(r)| \leq 1$,
    \begin{align}
        \left| \sum_{r \in R}  \left(\prod_{i=1}^k \tilde{a}_i(r)  \right) \left(1 - \prod_{i=1}^k\tilde \beta_i^2(r) \right)e\bigl(n'r/N\bigr) \right| \leq  C_1C_2\epsilon^2\delta^{-5/2}.
    \end{align}
    Additionally since $|\tilde \beta_i(r)| \leq 1 $, using Holder's inequality we have 
    \begin{align}
        &
        \left| \sum_{r \not\in R}  \left(\prod_{i=1}^k \tilde{a}_i(r)  \right) \left(1 - \prod_{i=1}^k\tilde \beta_i^2(r) \right)e\bigl(n'r/N\bigr) \right| 
        \\ &
        \leq \sup_{r \not\in R} \left|\prod_{i=1}^k \tilde{a}_i(r)  \right|^{\frac{1}{k+1}} \sum_{r \not\in R}  \prod_{i=1}^k \left| \tilde{a}_i(r) \right|^{\frac{k}{k+1}}
        \\ &
        \leq \delta^{k/(k+1)} \prod_{i=1}^k 
  \left(\sum_{r \not\in R} \left| \tilde{a}_i(r) \right|^{\frac{k^2}{k+1}} \right)^{1/k}
  \\ & \leq C_3 \delta^{k/(k+1)}.
    \end{align}
\end{proof}

This is the same as \cite{MR2180408}*{Lemma 6.3}. It shows that $a'_i$ does not take values much larger than $1$. 

\begin{lemma}
    Suppose that $\epsilon^{|R_i|} \geq C\log\log w/w$. Then for each $x \in \Z_N$
    $$
    |a_i'(x)| \leq (1+2C^{-1})/N.
    $$
\end{lemma}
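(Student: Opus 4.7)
The plan is to establish this pointwise bound by a Fourier-analytic argument that exploits the dual relationship between the Bohr set $B_i$ and the frequency set $R_i$. Writing
\begin{equation}
a_i'(x) = \frac{1}{N}\sum_{r \in \mathbb{Z}_N} \tilde a_i(r) \, \tilde\beta_i(r)^2 \, e(xr/N),
\end{equation}
the zero mode contributes exactly $\tilde a_i(0)/N = \alpha_i'/N$, and since $\alpha_i' \leq 1$ up to negligible errors from the $W$-trick normalization, this supplies the principal $1/N$ term in the target bound. The nonzero frequencies will be partitioned into $R_i \setminus \{0\}$ and its complement and bounded separately, each by $O(C^{-1}/N)$.

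For $r \notin R_i$ we have $|\tilde a_i(r)| < \delta$ directly from the definition of $R_i$. Combined with Parseval, which gives $\sum_r |\tilde\beta_i(r)|^2 = N^2/|B_i|$, and the standard Bohr set lower bound $|B_i| \geq \epsilon^{|R_i|} N$, this part contributes at most $\delta N/|B_i|$. Under the hypothesis $\epsilon^{|R_i|} \geq C \log\log w/w$, this is $O(C^{-1}/N)$ provided $\delta$ is chosen sufficiently small at the start of the overall transference argument.

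For $r \in R_i \setminus \{0\}$ the key observation is that the definition of $B_i$ forces $\|xr/N\| \leq \epsilon$ for every $x \in B_i$, so a first-order expansion of the exponentials in $\tilde\beta_i(r) = |B_i|^{-1}\sum_{x \in B_i} e(-xr/N)$ yields $|\tilde\beta_i(r) - 1| \ll \epsilon$. Thus this portion of the sum is close to a truncated exponential sum $\frac{1}{N}\sum_{r \in R_i \setminus \{0\}} \tilde a_i(r) e(xr/N)$ modulo an $O(\epsilon |R_i|/N)$ error, which can be absorbed using the cardinality bound $|R_i| \ll \delta^{-5/2}$ invoked in the proof of Lemma~\ref{l:aa'}, together with Cauchy--Schwarz and $|\tilde a_i(r)| \leq 1$.

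The main obstacle is the quantitative balancing of the three parameters $\epsilon$, $\delta$, and $|R_i|$: the off-$R_i$ Parseval estimate demands $|B_i|$ large (hence $\epsilon^{|R_i|}$ not too small), whereas the on-$R_i$ Taylor argument demands $\epsilon$ itself small. The hypothesis $\epsilon^{|R_i|} \geq C \log\log w / w$ is calibrated precisely so both constraints can be satisfied simultaneously, and the residual error terms then fit within the $2C^{-1}/N$ slack in the conclusion.
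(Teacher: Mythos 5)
Your approach has a genuine gap: the lemma cannot be proved by pure Fourier analysis from the bounds you invoke, and both halves of your frequency decomposition fail quantitatively. For the off-$R_i$ part, Parseval gives $\sum_r |\tilde\beta_i(r)|^2 = N/|B_i|$ (not $N^2/|B_i|$), so your estimate for that contribution to $a_i'(x)$ is $\tfrac1N\cdot\delta\cdot N/|B_i| = \delta/|B_i|$. Since the hypothesis only guarantees $|B_i| \geq \epsilon^{|R_i|}N \geq C N\log\log w/w$, and $w\to\infty$ with $n$, this is of size $\delta w/(CN\log\log w) \gg 1/N$ for any fixed $\delta$. You cannot repair this by taking $\delta \leq \log\log w/w$: then $|R_i| \ll \delta^{-5/2}$ blows up, the hypothesis $\epsilon^{|R_i|}\geq C\log\log w/w$ forces $\epsilon\to 1$, and the error term $\epsilon^2\delta^{-5/2}$ in Lemma \ref{l:aa'} becomes useless. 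The on-$R_i$ part is also not controlled: after your Taylor expansion you are left with $\tfrac1N\sum_{r\in R_i\setminus\{0\}}\tilde a_i(r)e(xr/N)$, and the tools you cite ($|\tilde a_i(r)|\leq 1$, $|R_i|\ll\delta^{-5/2}$, Cauchy--Schwarz) only bound this by $O(\delta^{-5/2}/N)$, a large constant times $1/N$ rather than $2C^{-1}/N$.

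The underlying obstruction is that the statement is arithmetic, not Fourier-analytic: $a_i$ is a prime-counting measure with $\|a_i\|_\infty \asymp \phi(W)\log N/(WN) \gg 1/N$, and the assertion that smoothing by $\beta_i*\beta_i$ brings it down to $(1+2C^{-1})/N$ pointwise says precisely that the primes $\equiv b_i \pmod W$ do not concentrate on translates of $B_i+B_i$. The trivial Fourier data cannot distinguish such a concentrated majorized measure from a spread-out one. The paper does not reprove this lemma; it is Lemma 6.3 of Green \cite{MR2180408}, whose proof works in physical space: write $a_i'(x) = |B_i|^{-2}\sum_{y,z\in B_i} a_i(x-y-z)$, observe that $y+z$ ranges over a Bohr set of measure at least $\epsilon^{|R_i|}N$, and apply an upper-bound sieve (Brun--Titchmarsh type) to the primes in the corresponding translate. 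The hypothesis $\epsilon^{|R_i|}\geq C\log\log w/w$ is calibrated exactly so that the sieve error is $O(C^{-1})$ of the main term --- the very appearance of $w$ in the hypothesis signals that sieve input, absent from your argument, is indispensable.
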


We also need a variant of Varnavides, but for sumsets. Namely, we need a result that assures us that ample subsets of $\mathbb Z_N$ contain all of $\mathbb Z_N$ in their sumset.  This is \cite{MR2661445}*{Lemma 3.3}, so we omit the proof. 

\begin{lemma} \label{l:V} 
Suppose $k\geq 2$, and $0< \theta_1, \dotsc, \theta_k\leq 1$ satisfy $\theta_1+ \cdots + \theta_k >1$. Set 
\begin{equation}
    \theta= \min\{ \theta_1,\dotsc,\theta_k, (\theta_1+\cdots+\theta_k -1)/(3k-5)\}. 
\end{equation}
Let $N > 2 \theta^{-2}$ be an integer, and $X_1,\dotsc,X_k \subset \mathbb Z_N$ 
satisfy $\lvert X_i\rvert \geq \theta_iN$, for $1\leq i \leq k$.   Then, we have, for every $n\in \mathbb Z_N$, 
\begin{align} \label{e:nu}
 \nu (n) &\coloneqq  \lvert 
 \{ (x_1,\dotsc,x_k) \colon 
 x_i\in X_i,\ n=x_1+\cdots+x_k\}\rvert 
 \\ 
 & \geq \theta^{2k-3}N ^{k-1}. 
\end{align}
\end{lemma}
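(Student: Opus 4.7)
I would prove Lemma~\ref{l:V} by induction on $k$. The base case $k=2$ is immediate from inclusion-exclusion: for any $n \in \mathbb{Z}_N$,
\[
\nu(n) = |X_1 \cap (n - X_2)| \geq |X_1| + |X_2| - N \geq (\theta_1 + \theta_2 - 1) N = \theta N,
\]
matching the claim since both $2k-3$ and $3k-5$ equal $1$ when $k=2$.

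For the inductive step from $k-1$ to $k$, set
\[
g(m) = \bigl|\{ (x_1, \ldots, x_{k-1}) \in X_1 \times \cdots \times X_{k-1} : x_1 + \cdots + x_{k-1} = m\}\bigr|,
\]
so that $\nu(n) = \sum_{y \in X_k} g(n - y)$. The plan is to identify a super-level set $T = \{m : g(m) \geq \alpha\}$, with threshold $\alpha$ of order $\theta^{2k-5} N^{k-2}$, and then apply inclusion-exclusion on the pair $(T, X_k)$ to obtain
\[
\nu(n) \geq \alpha \cdot |X_k \cap (n - T)| \geq \alpha \bigl( |X_k| + |T| - N \bigr) \geq \theta^{2k-3} N^{k-1}.
\]
When $\sum_{i<k} \theta_i > 1$, the inductive hypothesis applies uniformly, giving $T = \mathbb{Z}_N$ with $\alpha$ as required. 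The harder case is $\sum_{i<k} \theta_i \leq 1$, where the hypothesis does not apply directly. Here one combines the mass identity $\sum_m g(m) = \prod_{i<k} |X_i|$ with a Kneser-type lower bound on $|\operatorname{supp}(g)| = |X_1 + \cdots + X_{k-1}|$ and a pigeonhole argument to produce a super-level set of size $|T| \geq c_k \bigl( \sum_{i<k} \theta_i \bigr) N$. The assumption $\sum_i \theta_i > 1$ then ensures $|X_k| + |T| \geq (1 + \theta) N$, closing the argument.

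The principal obstacle is the bookkeeping of constants. Each inductive step costs one factor of $\theta$ in the threshold $\alpha$ and one factor of $\theta$ in the inclusion-exclusion on $X_k$, which explains the exponent $2k-3$. The denominator $3k-5$ in the definition of $\theta$ grows by $3$ per step, and must absorb the combined losses from the Kneser/Pl\"unnecke bound on the support of $g$, the pigeonhole passage to $T$, and the final inclusion-exclusion step. Verifying that these losses cumulate to at most $3$ per step --- and that the hypothesis $N > 2\theta^{-2}$ survives intact through the recursion --- is the main technical difficulty.
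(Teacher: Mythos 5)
The paper does not actually prove this lemma; it is quoted from Li--Pan \cite{MR2661445}*{Lemma 3.3} (where, note, $N$ is taken to be \emph{prime} --- as it is in the application in \S\ref{s:proof_of_theorem}). Your base case $k=2$ is fine, but the inductive step has a genuine gap precisely at the point you defer: the case $\sum_{i<k}\theta_i\leq 1$. First, the bound you assert there, $|T|\geq c_k\bigl(\sum_{i<k}\theta_i\bigr)N$ for a super-level set $T=\{m: g(m)\geq\alpha\}$ with $\alpha\sim\theta^{2k-5}N^{k-2}$, does not follow from the ingredients you name: a Kneser/Cauchy--Davenport bound controls only $|\operatorname{supp}(g)|$, i.e.\ the level set at threshold $1$, not at threshold $\alpha\sim N^{k-2}$; and the mass identity $\sum_m g(m)=\prod_{i<k}|X_i|$ combined with the trivial bound $\max_m g(m)\leq N^{k-2}$ yields only $|T|\geq\bigl(\prod_{i<k}\theta_i-o(1)\bigr)N$, a \emph{product}, which can be far smaller than the sum (take $\theta_1=\theta_2=0.05$, $\theta_3=0.95$). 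Second, even granting your bound, it does not close the argument: inclusion-exclusion requires $|T|\geq(1-\theta_k+\theta)N$, which approaches $N$ when $\theta_k$ is small, whereas $c_k\sum_{i<k}\theta_i\leq c_k<1$. What is actually needed is an averaging (layer-cake) argument over the threshold $\alpha$, choosing $\alpha$ so that $\alpha\cdot(|T_\alpha|+|X_k|-N)$ is maximized; this is essentially what Li--Pan do, building the sumset two factors at a time via level sets of $\mathbf 1_{X_1}\ast\mathbf 1_{X_2}$ rather than peeling off the last factor as you do.

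Two further points. The "easy" case $\sum_{i<k}\theta_i>1$ is also not automatic: the inductive parameter $\theta'=\min\{\theta_1,\dotsc,\theta_{k-1},(\sum_{i<k}\theta_i-1)/(3k-8)\}$ need not dominate $\theta$ (e.g.\ when $\theta_k$ is large and $\sum_{i<k}\theta_i-1$ is tiny), so neither the hypothesis $N>2(\theta')^{-2}$ nor the inequality $\theta_k(\theta')^{2k-5}\geq\theta^{2k-3}$ is free; this is exactly the constant bookkeeping you flag as "the main technical difficulty," i.e.\ it remains unverified. Finally, your Kneser step silently requires $N$ prime: for composite $N$ the statement as printed is false (take $N$ even, $k=3$, and each $X_i$ the set of even residues, so $\nu(n)=0$ for odd $n$ while the lemma demands $\nu(n)\geq\theta^{3}N^{2}$), so any correct proof must invoke primality, which your write-up should make explicit.
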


We can now conclude the proof of our Theorem.  Namely, we show that 
\begin{align} 
      \sum _{\substack{x_1,\cdots,x_k \in \mathbb Z_N \\ x_1 + \cdots + x_k = n'    }} \prod_{i=1}^k a'_i(x_i) \geq \frac{\kappa^{4k-3}}{k^{3k-3}N}.  
\end{align} 
We want to use the previous Lemma, which applies to sumsets. Thus, we define  $A_i' = \{ x \in \Z_N \mid a_i'(x) \geq \alpha_i' \kappa/N \}$.  We have that 
\begin{align}
    \alpha_i' = \sum_{x \in \Z_N} a_i(x) = \sum_{x \in \Z_N} a_i'(x) \leq \frac{1+\kappa}{N}|A_i'| + \frac{\alpha_i' \kappa}{N}(N-|A_i'|),
\end{align}
From this, we see that $A_i'$ has a lower bound on its size: 
$$
 \theta_i \coloneqq|A_i'| \geq \frac{\alpha_i'(1-\kappa)}{1+\kappa}N.
$$
By \eqref{e:alphaPrime}, we see that $A_i'$ satisfy the hypotheses of Lemma \ref{l:V}.  
It follows from \eqref{e:nu}, that $n'$ 
can be represented as a sum of elements of the $A_i'$ at least $\theta^{2k-3}N^{k-1} $ times, where $\theta= \kappa/k$.   
Therefore
\begin{align} 
      \sum _{\substack{x_1,\cdots,x_k \in \mathbb Z_N \\ x_1 + \cdots + x_k = n'    }} \prod_{i=1}^k a'_i(x_i) &\geq \sum _{\substack{x_i \in A_i' \\ x_1 + \cdots + x_k = n'    }} \prod_{i=1}^k a'_i(x_i) 
      \\ 
      & \geq   \theta^{2k-3}N^{-1}\prod_{i=1}^k {\alpha_i' \kappa}
      \\ 
      & = \kappa^{2k} k^{3-2k} N ^{-1}. 
\end{align}
This is the estiamte for $a_i'$. 
Recalling Lemma \ref{l:aa'}, the last step is to choose $0<\delta<1$, and then $0<\epsilon<1$, so that we conclude that 
\begin{align} 
      \sum _{\substack{x_1,\cdots,x_k \in \mathbb Z_N \\ x_1 + \cdots + x_k = n'    }} \prod_{i=1}^k a_i(x_i) &\geq \tfrac 12 \kappa^{2k} k^{3-2k} N ^{-1}. 
\end{align}
And this concludes the proof. \textbf{}

\begin{bibdiv}

    \begin{biblist}
\bib{alsetriShao}{article}{
  title={Density versions of the binary Goldbach problem},
  author={Alsetri, Ali},
  author={Shao, Xuancheng},
  year={2024},
  eprint={2405.18576},
  url={https://arxiv.org/abs/2405.18576},
}

\bib{gao}{article}{
  title={A density version of Waring-Goldbach problem},
  author={Gao, Meng},
  journal={arXiv preprint arXiv:2308.06286},
  year={2023},
}

\bib{sal}{article}{
  author={Salmensuu, Juho},
  title={A density version of Waring's problem},
  journal={Acta Arith.},
  volume={199},
  date={2021},
  number={4},
  pages={383--412},
  issn={0065-1036},
  review={\MR {4309845}},
  doi={10.4064/aa200601-1-2},
}

\bib{MR2180408}{article}{
  author={Green, Ben},
  title={Roth's theorem in the primes},
  journal={Ann. of Math. (2)},
  volume={161},
  date={2005},
  number={3},
  pages={1609--1636},
  issn={0003-486X},
  review={\MR {2180408}},
  doi={10.4007/annals.2005.161.1609},
}

\bib{MR2661445}{article}{
  author={Li, Hongze},
  author={Pan, Hao},
  title={A density version of Vinogradov's three primes theorem},
  journal={Forum Math.},
  volume={22},
  date={2010},
  number={4},
  pages={699--714},
  issn={0933-7741},
  review={\MR {2661445}},
  doi={10.1515/FORUM.2010.039},
}

\bib{gao}{article}{
  title={A density version of Waring-Goldbach problem},
  author={Gao, Meng},
  journal={arXiv preprint arXiv:2308.06286},
  year={2023},
}

\bib{sal}{article}{
  title={A Density version of Waring's problem},
  author={Salmensuu, Juho},
  journal={arXiv preprint arXiv:2003.04918},
  year={2020},
}

\bib{MR3680137}{article}{
  author={Prendiville, Sean},
  title={Four variants of the Fourier-analytic transference principle},
  journal={Online J. Anal. Comb.},
  number={12},
  date={2017},
  pages={Paper No. 5, 25},
  review={\MR {3680137}},
}

\bib{MR1832691}{article}{
  author={S\'ark\"ozy, Andr\'as},
  title={Unsolved problems in number theory},
  journal={Period. Math. Hungar.},
  volume={42},
  date={2001},
  number={1-2},
  pages={17--35},
  issn={0031-5303},
  review={\MR {1832691}},
  doi={10.1023/A:1015236305093},
}

\bib{MR3165421}{article}{
  author={Shao, Xuancheng},
  title={A density version of the Vinogradov three primes theorem},
  journal={Duke Math. J.},
  volume={163},
  date={2014},
  number={3},
  pages={489--512},
  issn={0012-7094},
  review={\MR {3165421}},
  doi={10.1215/00127094-2410176},
}

\bib{2024arXiv240906894L}{article}{
  author={{Leng}, James and {Sawhney}, Mehtaab},
  title={Vinogradov's theorem for primes with restricted digits},
  doi={10.48550/arXiv.2409.06894},
}

\bib{basak2024primesneededternarygoldbach}{article}{
  title={Almost all primes are not needed in Ternary Goldbach},
  author={Debmalya Basak and Raghavendra N. Bhat and Anji Dong and Alexandru Zaharescu},
  year={2024},
  eprint={2409.08968},
  archiveprefix={arXiv},
  primaryclass={math.NT},
  url={https://arxiv.org/abs/2409.08968},
}

\end{biblist} 

\end{bibdiv}
\end{document}